\newtheorem{thm}{Theorem}[section]
\newtheorem{cor}[thm]{Corollary}
\newtheorem{lem}[thm]{Lemma}
\newtheorem*{thm*}{Theorem}
\theoremstyle{definition}
\newtheorem{dfn}[thm]{Definition}
\newtheorem{rem}[thm]{Remark}
\newtheorem{conv}[thm]{Convention}
\newtheorem*{claim*}{Claim}
\theoremstyle{remark}
\newtheorem*{ac}{Acknowledgments}
\def\p{\mathfrak{p}}
\def\m{\mathfrak{m}}
\def\xx{\text{{\boldmath$x$}}}
\def\spec{\operatorname{\mathsf{Spec}}}
\def\supp{\operatorname{\mathsf{Supp}}}
\def\ssupp{\operatorname{\mathsf{Supp}_{\mathrm{sg}}}}
\def\sing{\operatorname{\mathsf{Sing}}}
\def\res{\operatorname{\mathsf{res}}}
\def\thick{\operatorname{\mathsf{thick}}}
\def\mod{\operatorname{\mathsf{mod}}}
\def\ipd{\operatorname{\mathsf{IPD}}}
\def\dim{\operatorname{\mathsf{dim}}}
\def\proj{\operatorname{\mathsf{proj}}}
\def\add{\operatorname{\mathsf{add}}}
\def\Rfd{\operatorname{\mathsf{Rfd}}}
\def\depth{\operatorname{\mathsf{depth}}}
\def\pd{\operatorname{\mathsf{pd}}}
\def\fl{\operatorname{\mathsf{fl}}}
\def\db{\mathbf{D}_{\mathrm{b}}}
\def\ds{\mathbf{D}_{\mathrm{sg}}}
\def\M{\mathsf{e}}
\def\H{\mathsf{H}}
\def\K{\mathsf{K}}
\def\Ext{\mathsf{Ext}}
\def\syz{\mathsf{\Omega}}
\def\tr{\mathsf{Tr}}
\def\nf{\mathsf{NF}}
\def\v{\mathsf{V}}
\def\d{\mathsf{D}}
\def\s{\mathsf{S}}
\def\A{\mathcal{A}}
\def\X{\mathcal{X}}
\def\Y{\mathcal{Y}}
\def\Z{\mathcal{Z}}
\def\T{\mathcal{T}}
\def\C{\mathcal{C}}
\def\ZZ{\mathbb{Z}}
\begin{document}
\setlength{\baselineskip}{15pt}
%%%%%%%%%%%%%%%%%%%%%%%%%%%%%%%%%%%%%%%%%%%%%%%%%%%%%%%%
\title[Reconstruction from Koszul homology]{Reconstruction from Koszul homology and applications to module and derived categories}
\author{Ryo Takahashi}
\address{Graduate School of Mathematics, Nagoya University, Furocho, Chikusaku, Nagoya, Aichi 464-8602, Japan}
\email{takahashi@math.nagoya-u.ac.jp}
\urladdr{http://www.math.nagoya-u.ac.jp/~takahashi/}
%\address{Mathematical Sciences Research Institute (MSRI), 17 Gauss Way, Berkeley, California 94720-5070, USA}
%\email{rtakahashi@msri.org}
\thanks{2010 {\em Mathematics Subject Classification.} 13C60, 13D09, 18E30, 18E35}
\thanks{{\em Key words and phrases.} Koszul complex, Koszul homology, module category, derived category, singularity category, resolving subcategory, thick subcategory}
\thanks{The author was partially supported by JSPS Grant-in-Aid for Young Scientists (B) 22740008 and by JSPS Postdoctoral Fellowships for Research Abroad}
\begin{abstract}
Let $R$ be a commutative noetherian ring.
Let $M$ be a finitely generated $R$-module.
In this paper, we reconstruct $M$ from its Koszul homology with respect to a suitable sequence of elements of $R$ by taking direct summands, syzygies and extensions, and count the number of those operations.
Using this result, we consider generation and classification of certain subcategories of the category of finitely generated $R$-modules, its bounded derived category and the singularity category of $R$.
\end{abstract}
\maketitle
%\tableofcontents
%%%%%%%%%%%%%%%%%%%%%%%%%%%%%%%%%%%%%%%%%%%%%%%%%%%%%%%%%%%%%%%%%%
\section{Introduction}\label{intro}

For the past five decades, a lot of classification theorems of subcategories of abelian categories and triangulated categories have been given in ring theory, representation theory, algebraic geometry and algebraic topology; see, for instance, \cite{Ba,Ba2,BIK,crspd,FP,G,HS,H,K,KS,N0,St,stcm,crs,T} and the references therein.
Reconstruction of an object from its {\em support} in the spectrum of a suitable commutative ring plays a crucial role in the proofs of those theorems.

On the other hand, the notions of the dimensions of triangulated categories have been introduced by Bondal--Van den Bergh and Rouquier \cite{BV,R} and analogues for abelian categories by Dao--Takahashi \cite{radius,dim}.
These essentially indicate the number of {\em extensions} necessary to build all objects out of a single object.
There are many related studies; for example, see \cite{ddc,ABIM,AI2,BFK,BIKO,BC,Cd,KK,sing,Op,Or,R2,S,res}.

In this paper, we study reconstructing a given module from its Koszul homology and counting the number of necessary operations.
Our main result is the following theorem.

\begin{thm}\label{main}
Let $R$ be a commutative noetherian ring, and let $M$ be a finitely generated $R$-module.
Let $\xx=x_1,\dots,x_n$ be a sequence of elements of $R$ such that $M$ is locally free on $\d(\xx)$.
Then there exists a positive integer $k$ such that the Koszul complex $\K(\xx^k,M)$ is equivalent to a complex of finitely generated $R$-modules
$$
(0 \to N \to P_{n-1} \to \cdots \to P_0 \to 0),
$$
where $P_0,\dots,P_{n-1}$ are projective and $M$ is a direct summand of $N$.
In particular, $M$ can be built out of the Koszul homologies $\H_0(\xx^k,M),\dots,\H_n(\xx^k,M)$ by taking $n$ syzygies, $n$ extensions and $1$ direct summand.
\end{thm}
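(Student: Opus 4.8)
The plan is to establish first the displayed assertion about the complex $(0\to N\to P_{n-1}\to\cdots\to P_0\to 0)$, and then to read off the ``building'' statement from it. That last deduction is formal. Such a complex has homology $\H_0(\xx^k,M),\dots,\H_n(\xx^k,M)$, and one peels off the projective terms one at a time. Dividing out the subcomplex consisting of $P_0$ in degree $0$ produces a quasi-isomorphic complex in degrees $[1,n]$ whose bottom homology $E_1$ sits in an exact sequence $0\to\H_1(\xx^k,M)\to E_1\to\syz\H_0(\xx^k,M)\to 0$ with $\syz\H_0(\xx^k,M)=\ker(P_0\twoheadrightarrow\H_0(\xx^k,M))$; dividing out $P_1$ next produces $E_2$ with $0\to\H_2(\xx^k,M)\to E_2\to\syz E_1\to 0$ exact; and so on. After $n$ steps one is left with $N$ in degree $n$, satisfying $0\to\H_n(\xx^k,M)\to N\to\syz E_{n-1}\to 0$; thus $N$ is built from $\H_0(\xx^k,M),\dots,\H_n(\xx^k,M)$ by $n$ syzygies and $n$ extensions, and since $M\mid N$ one further direct summand yields $M$.

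For the displayed assertion I would induct on $n$. For $n=0$ it is vacuous ($\K=M$, take $N=M$). For the inductive step write $\xx=\xx',x_n$ with $\xx'=x_1,\dots,x_{n-1}$; as $(\xx')\subseteq(\xx)$ we have $\d(\xx')\subseteq\d(\xx)$, so $M$ is locally free on $\d(\xx')$ and the inductive hypothesis gives, for $k$ large, a quasi-isomorphism $\K(\xx'^k,M)\simeq C'=(0\to N'\to P'_{n-2}\to\cdots\to P'_0\to 0)$ with the $P'_i$ projective and $M\mid N'$. Since $\K(\xx^k)=\K(\xx'^k)\otimes_R\K(x_n^k)$, we get $\K(\xx^k,M)\cong\operatorname{Cone}\bigl(x_n^k\colon C'\to C'\bigr)$, all of whose terms are projective except that $N'$ occurs in degrees $n$ and $n-1$. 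Pick a surjection $Q\twoheadrightarrow N'$ with $Q$ projective and kernel $S'=\syz N'$, and resolve the offending copy of $N'$ in degree $n-1$ by the standard pullback modification: replace it by $Q$, and replace the degree-$n$ term $N'$ by the pullback $N$ of $x_n^k\colon N'\to N'$ along $Q\twoheadrightarrow N'$. This yields a quasi-isomorphic complex $(0\to N\to P_{n-1}\to\cdots\to P_0\to 0)$ with all $P_i$ projective, in which $0\to S'\to N\to N'\to 0$ is exact.

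It remains to see $M\mid N$. The class of this last sequence in $\operatorname{Ext}^1_R(N',S')$ is $x_n^k$ times that of $0\to S'\to Q\to N'\to 0$; pulling it back along the split inclusion $M\hookrightarrow N'$ puts it into $\operatorname{Ext}^1_R(M,S')$. Because $M$ is locally free on $\d(x_n)\subseteq\d(\xx)$, the module $\operatorname{Ext}^1_R(M,S')$ vanishes after localizing at any prime outside $V(x_n)$, so, being finitely generated, it is killed by a power of $x_n$; hence for $k$ large the pulled-back class is $0$. Then $M\hookrightarrow N'$ lifts along $N\twoheadrightarrow N'$ to a map $M\to N$, and composing it with $N\twoheadrightarrow N'\twoheadrightarrow M$ (the last arrow the projection onto the summand $M$ of $N'$) gives $\mathrm{id}_M$, so $M$ is a direct summand of $N$; the induction is complete.

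The main obstacle is the coupling between the induction and the choice of $k$: the power of $x_n$ needed at stage $n$ to kill $\operatorname{Ext}^1_R(M,S')$ depends on $S'=\syz N'$, which was itself produced using the stage-$(n-1)$ value of $k$. One must arrange a single $k$ — large relative to all of $x_1,\dots,x_n$ at once — to work throughout, by bounding the relevant annihilators uniformly in $k$; this is feasible because every module in play arises from $M$ and free modules by a number of syzygies and extensions bounded in terms of $n$, and all the $\operatorname{Ext}$ groups entering the obstruction are supported on $V(x_n)$ exactly because $M$ is locally free on $\d(\xx)$. Checking this uniformity, along with the routine verifications that the one-term pullback modification is a quasi-isomorphism and that the obstruction lives in $\operatorname{Ext}^1_R(M,S')$ as claimed, is the technical heart; the conceptual role of the hypothesis is precisely to force every obstruction onto $V(x_n)$, so that a high power of $x_n$ erases it.
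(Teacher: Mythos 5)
Your inductive strategy is essentially the one the paper uses — induct on $n$, split $\K(\xx^k,M)$ as $\K(\xx'^k,M)\otimes\K(x_n^k,R)$, and modify the resulting two-row complex by a pullback that trades the non-projective term in degree $n-1$ for a projective cover. Your peeling-off argument at the start matches the paper's Corollary \ref{241142}(2) exactly. Two differences stand out. First, the paper's Theorem \ref{181627} carries along a much stronger inductive statement: the top module is explicitly $N=\bigoplus_{j=0}^n(\syz^jM)^{\oplus\binom{n}{j}}$, so that $M$ is a summand by inspection, whereas you only carry ``$M$ is a summand of $N'$'' and must then run an obstruction argument in $\Ext^1_R(M,S')$ to conclude $M\mid N$. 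Your obstruction analysis (the class is $x_n^k$ times that of $0\to S'\to Q\to N'\to 0$, pulled back along the split inclusion $M\hookrightarrow N'$) is correct, but it is extra work the paper avoids.

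The genuine gap is the one you yourself flag and then leave unresolved: the coupling between $k$ and the induction. In your argument, the power of $x_n$ that must be killed depends on $\Ext^1_R(M,S')$, where $S'=\syz N'$ was built at stage $n-1$ using the stage-$(n-1)$ value of $k$; increasing $k$ to kill the new obstruction changes $N'$ and hence $S'$, so the bound you need is a moving target. You assert that one can ``bound the relevant annihilators uniformly in $k$'' but do not do so, and it is not automatic. The paper cuts this knot by choosing $k$ \emph{once and for all before the induction starts}: Lemma \ref{181842} (quoting \cite[Remark 5.2(1)]{dim}) shows that if $\nf(M)\subseteq\v(\xx)$ there is a single $k$ with $\xx^k$ annihilating $\Ext_R^i(M,N)$ for \emph{all} $i>0$ and \emph{all} finitely generated $N$ simultaneously. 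Theorem \ref{181627} is then stated for a fixed sequence satisfying this annihilation hypothesis, so its induction never needs to revisit $k$; the relevant $\Ext$ group at each step is a direct sum of modules $\Ext_R^{j+1}(M,\syz^{k+1}M)$, all of which the fixed $\xx^k$ already kills. If you want your version to close, you should prove (or cite) a uniform annihilation statement of this kind up front and then run your induction with the power of $\xx$ held fixed.
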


%\noindent
Note that since the free locus of a finitely generated $R$-module is an open subset of $\spec R$ in the Zariski topology, there exist many such sequences $\xx$ that satisfy the assumption of the theorem.
We shall prove a more general result in Theorem \ref{181627}.

Theorem \ref{main} has a lot of applications.
To state some of them, we fix notation.
Let $\mod R$ be the category of finitely generated $R$-modules and $\db(R)$ the bounded derived category of $\mod R$.
We denote by $\ds(R)$ the {\em singularity category} of $R$.
This category has been introduced and studied by Buchweitz \cite{B} in connection with Cohen--Macaulay modules over Gorenstein rings.
In recent years, it has been investigated by Orlov \cite{O1,O2,O3,O,O5} in relation to the Homological Mirror Symmetry Conjecture.

Let $\s(R)$ be the set of prime ideals $\p$ of $R$ such that $R_\p$ is not a field, and denote by $\sing R$ the singular locus of $R$.
Applying Theorem \ref{main}, we can prove the following result on classification of subcategories.

\begin{cor}\label{301200}
Let $R$ be a commutative noetherian ring.
\begin{enumerate}[\rm(1)]
\item
There is a one-to-one correspondence between:
\begin{itemize}
\item
the specialization-closed subsets of $\s(R)$,
\item
the resolving subcategories of $\mod R$ generated by a Serre subcategory of $\mod R$.
\end{itemize}
\item
There are one-to-one correspondences among:
\begin{itemize}
\item
the specialization-closed subsets of $\sing R$,
\item
the thick subcategories of $\db(R)$ generated by $R$ and a Serre subcategory of $\mod R$,
\item
the thick subcategories of $\ds(R)$ generated by a Serre subcategory of $\mod R$.
\end{itemize}
\end{enumerate}
\end{cor}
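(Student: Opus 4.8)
The plan is to translate everything into the language of supports and then feed Theorem \ref{main} into the classical dictionaries of Gabriel and Verdier. Recall that $\s\mapsto\supp\s:=\bigcup_{M\in\s}\supp M$ and $W\mapsto\serre(W):=\{M\in\mod R:\supp M\subseteq W\}$ are mutually inverse bijections between the Serre subcategories of $\mod R$ and the specialization-closed subsets of $\spec R$; in particular $\supp(\serre(W))=W$ for every such $W$ and $\serre(\supp\s)=\s$ for every Serre subcategory $\s$.

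For (1) I would consider the assignment sending a resolving subcategory $\X$ to $\nf(\X):=\bigcup_{M\in\X}\nf(M)$. Since the free locus of a finitely generated module over a noetherian ring is open, each $\nf(M)$ is a closed subset of $\spec R$, and it is contained in $\s(R)$ because finitely generated modules over a field are free; thus $\nf(\X)$ is a specialization-closed subset of $\s(R)$. Conversely, a specialization-closed $W\subseteq\s(R)$ is automatically specialization-closed in $\spec R$ (a prime properly containing a prime of $\s(R)$ again lies in $\s(R)$), so $\res(\serre(W))$ makes sense and is a resolving subcategory generated by a Serre subcategory. That $W\mapsto\nf(\res(\serre(W)))$ is the identity is immediate: ``$\subseteq$'' holds because $\nf(M)\subseteq\supp M$, and ``$\supseteq$'' because for $\p\in W$ one has $R/\p\in\serre(W)$ and $\p\in\nf(R/\p)$, the residue field of the non-field $R_\p$ not being free. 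For the reverse composite, write $\X=\res(\s)$ with $\s$ Serre. As $\nf(-)$ is monotone under direct summands, extensions and syzygies and vanishes on projectives, one gets $\nf(\X)=\nf(\s)=:W\subseteq\supp\s$, whence $\serre(W)\subseteq\serre(\supp\s)=\s$ and $\res(\serre(W))\subseteq\X$. The opposite inclusion is exactly where Theorem \ref{main} enters: given $M\in\s$, choose $\xx=x_1,\dots,x_n$ generating an ideal with vanishing locus $\nf(M)$, so that $M$ is locally free on $\d(\xx)$; then $M$ is built from $\H_0(\xx^k,M),\dots,\H_n(\xx^k,M)$ by taking syzygies, extensions and a direct summand, all of which preserve resolving subcategories, and each $\H_i(\xx^k,M)$ is a subquotient of a finite direct sum of copies of $M$ and is annihilated by $(\xx^k)$, hence $\supp\H_i(\xx^k,M)\subseteq\supp M\cap V(\xx)=\nf(M)\subseteq W$, so $\H_i(\xx^k,M)\in\serre(W)$. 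Therefore $M\in\res(\serre(W))$, giving $\X\subseteq\res(\serre(W))$ and proving (1).

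For (2), I would first dispose of the equivalence between the two families of thick subcategories using the Verdier quotient $\ds(R)=\db(R)/\thick_{\db(R)}(R)$: it induces an inclusion-preserving bijection between thick subcategories of $\db(R)$ containing $R$ and thick subcategories of $\ds(R)$, sending $\thick_{\db(R)}(R\cup\s)$ to $\thick_{\ds(R)}(\s)$. The remaining task is the bijection between specialization-closed subsets of $\sing R$ and thick subcategories of $\ds(R)$ generated by a Serre subcategory; the relevant support here is the singular support $\ssupp M=\{\p:\pd_{R_\p}M_\p=\infty\}$, a closed subset of $\sing R$ that is well defined on $\ds(R)$ and monotone under triangles, shifts and summands. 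That $W\mapsto\ssupp(\thick_{\ds(R)}(\serre(W)))$ is the identity on specialization-closed subsets of $\sing R$ is proved as in (1) (for $\p\in W$, $R/\p\in\serre(W)$ and $\p\in\ssupp(R/\p)$ since $R_\p$ is not regular). For the reverse composite one must show, for a Serre subcategory $\s$ with $W:=\supp\s\cap\sing R$, that $\s\subseteq\thick_{\ds(R)}(\serre(W))$, the opposite inclusion being formal. Filtering an $M\in\s$ by prime cyclic quotients reduces this to proving, for every prime $\p$, that $R/\p\in\thick_{\ds(R)}(\serre(V(\p)\cap\sing R))$. When $\p\in\sing R$ this is trivial, since then $V(\p)\subseteq\sing R$ and $R/\p\in\serre(V(\p))$.

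The case $\p\notin\sing R$ is the step I expect to require the most care, since Theorem \ref{main} applied to $R/\p$ itself is vacuous (any $\xx$ with $\nf(R/\p)\subseteq V(\xx)$ has $(\xx)\subseteq\p$, so $\H_0(\xx^k,R/\p)=R/\p$). My plan is a noetherian induction on the closed set $V(\p)$. As $R_\p$ is regular of finite dimension $h$, the module $N:=\syz^h(R/\p)$ becomes free after localizing at $\p$ (the $h$-th syzygy of a module of projective dimension $h$ is free), while $R/\p\cong N[h]$ in $\ds(R)$; hence $\nf(N)$ is a closed subset of $V(\p)$ not containing $\p$, so strictly smaller than $V(\p)$. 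If $N$ is projective it is zero in $\ds(R)$ and we are done; otherwise, applying Theorem \ref{main} to $N$ with $\xx$ generating an ideal with vanishing locus $\nf(N)$ exhibits $N$ in $\thick_{\ds(R)}$ of the Koszul homologies $\H_i(\xx^k,N)$, each of which satisfies $\supp\H_i(\xx^k,N)\subseteq\supp N\cap V(\xx)=\nf(N)\subsetneq V(\p)$. By the induction hypothesis each $\H_i(\xx^k,N)$ lies in $\thick_{\ds(R)}(\serre(\nf(N)\cap\sing R))\subseteq\thick_{\ds(R)}(\serre(V(\p)\cap\sing R))$, hence so does $N$, hence so does $R/\p$. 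The point to watch here is that $\sing R$ need not be Zariski-closed when $R$ is not excellent, which is why one works throughout with the closed sets $\nf(-)$ and with syzygies rather than trying to cut $R/\p$ down by a sequence defining $\sing R$; once this inclusion is in hand, $\ssupp(-)$ and $\thick_{\ds(R)}(\serre(-))$ are mutually inverse and, together with the Verdier identification, yield all three correspondences in (2).
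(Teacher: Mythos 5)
Your proposal is correct, and while part (1) follows essentially the paper's own line (Theorem \ref{main} applied to each module of the Serre subcategory, Gabriel's classification, and the nonfree locus $\nf(-)$ as the inverse map --- the paper packages this as $\mod^\Phi(R)=\res(\M^\Phi(R))$ in Theorem \ref{190906}(1) plus Lemma \ref{301519} and Theorem \ref{301626}(1)), your part (2) takes a genuinely different route. The paper proves the stronger identifications $\thick_{\db(R)}(\{R\}\cup\M^\Phi(R))=\db^\Phi(R)$ and $\thick_{\ds(R)}(\M^\Phi(R))=\ds^\Phi(R)$ (Theorem \ref{190906}(2),(3)), handling an arbitrary complex by passing to a single syzygy module whose exponent is bounded uniformly over $\spec R$ by the finiteness of the large restricted flat dimension $\Rfd$ (citing Avramov--Iyengar--Lipman), and then obtains the corollary from the equality $\db^Z(R)=\db^{Z\cap\sing R}(R)$. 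You avoid both the $\Rfd$ input and the support-theoretic identification: you quotient out $R$ via the standard Verdier bijection between thick subcategories of $\db(R)$ containing $R$ and thick subcategories of $\ds(R)$ (under which $\thick_{\db(R)}(\{R\}\cup\X)$ corresponds to $\thick_{\ds(R)}(\X)$), and you prove the key generation statement $\X\subseteq\thick_{\ds(R)}(\serre(\supp\X\cap\sing R))$ by noetherian induction on $\v(\p)$: at a regular prime you replace $R/\p$ by $N=\syz^{\dim R_\p}(R/\p)$, which is free at $\p$ and isomorphic to $R/\p$ up to shift in $\ds(R)$, apply Theorem \ref{main} to $N$, and recurse on the Koszul homologies, whose supports lie in the strictly smaller closed set $\nf(N)\subseteq\v(\p)\setminus\{\p\}$ --- exactly the point where a direct application of Theorem \ref{main} to $R/\p$ would be vacuous, as you note. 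The trade-off: the paper's route delivers the explicit descriptions $\db^\Phi(R)$, $\ds^\Phi(R)$ of the thick subcategories in question (for arbitrary complexes, reused in Corollaries \ref{011103} and \ref{301355}) at the cost of the $\Rfd$ theorem; your route needs only that regular local rings have finite global dimension, prime filtrations, Gabriel's theorem and the (standard, though stated without proof) Verdier correspondence, but classifies the thick closures without identifying them with the locally-perfect/locally-zero subcategories --- which the statement of Corollary \ref{301200} indeed does not require.
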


When $R$ is local, let $\mod^\circ(R)$ (respectively, $\db^\circ(R)$, $\ds^\circ(R)$) be the full subcategories of $\mod R$ (respectively, $\db(R)$, $\ds(R)$) consisting of modules (respectively, complexes) that are locally free (respectively, perfect, zero) on the punctured spectrum of $R$.
Applying Theorem \ref{main}, we can prove the following result on generation of subcategories.

\begin{cor}\label{011103}
Let $R$ be a commutative noetherian local ring of Krull dimension $d$ with residue field $k$.
\begin{enumerate}[\rm(1)]
\item
Every object in $\mod^\circ(R)$ is built out of a module of finite length by taking $d$ extensions in $\mod R$, up to finite direct sums, direct summands and syzygies.
\item
Every object in $\ds^\circ(R)$ is built out of a module of finite length by taking $d$ extensions in $\ds(R)$, up to finite direct sums, direct summands and shifts.
\end{enumerate}
In particular, one has that $\mod^\circ(R)$ is generated by $k$ as a resolving subcategory of $\mod R$, that $\db^\circ(R)$ is generated by $R$ and $k$ as a thick subcategory of $\db(R)$, and that $\ds^\circ(R)$ is generated by $k$ as a thick subcategory of $\ds(R)$.
\end{cor}

Corollary \ref{011103} yields variants of results shown by Schoutens \cite{S} and Takahashi \cite{res,stcm}.
It also recovers a result on isolated singularities given by Keller--Murfet--Van den Bergh \cite{KMV}.
Furthermore, utilizing it, one can show the following result.

\begin{cor}\label{301201}
Let $R$ be a commutative noetherian ring.
The following are equivalent for a resolving subcategory $\X$ of $\mod R$:
\begin{enumerate}[\rm(1)]
\item
$\X$ is generated by a Serre subcategory of $\mod R$;
\item
$\X$ is closed under tensor products and transposes.
\end{enumerate}
Hence there is a one-to-one correspondence between the specialization-closed subsets of $\s(R)$ and the resolving subcategories of $\mod R$ closed under tensor products and transposes.
\end{cor}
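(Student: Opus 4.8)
My plan is to prove the equivalence of (1) and (2) directly; the concluding ``Hence'' then follows at once by combining it with the bijection of Corollary \ref{301200}(1) between specialization-closed subsets of $\s(R)$ and resolving subcategories of $\mod R$ generated by a Serre subcategory. I write $\serre(W)=\{N\in\mod R\mid\supp N\subseteq W\}$ for a specialization-closed $W\subseteq\spec R$, and use freely that the non-free locus $\nf(N)=\{\p\mid N_\p\text{ is not }R_\p\text{-free}\}$ of a finitely generated module $N$ is a closed subset of $\spec R$ contained in $\s(R)$, with $\nf(N)\subseteq\supp N$, $\nf(\tr N)=\nf(N)$, $\nf(N_1\otimes_RN_2)\subseteq\nf(N_1)\cup\nf(N_2)$, $\nf(\syz N)\subseteq\nf(N)$, and $\nf(N')\subseteq\nf(N)$ whenever $N'$ is a direct summand of $N$. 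For (1)$\Rightarrow$(2), I would first invoke Corollary \ref{301200} to write $\X=\res(\serre(W))$ for some specialization-closed $W\subseteq\s(R)$, and then verify, by induction on the construction of a resolving subcategory and the stability properties of $\nf$, that $\nf(N)\subseteq W$ for every $N\in\X$. Given $M,N\in\X$, the modules $M\otimes_RN$ and $\tr M$ have non-free loci contained in $\nf(M)\cup\nf(N)\subseteq W$ and in $\nf(M)\subseteq W$. For either of these modules $L$, choosing a finite generating set $\xx$ of an ideal with $\v(\xx)=\nf(L)$ makes $L$ locally free on $\d(\xx)$, so Theorem \ref{main} builds $L$ from the Koszul homologies $\H_0(\xx^k,L),\dots,\H_n(\xx^k,L)$ by taking syzygies, extensions and a direct summand; these homologies are annihilated by $\xx$, hence supported on $\v(\xx)=\nf(L)\subseteq W$, hence lie in $\serre(W)$, so $L\in\res(\serre(W))=\X$. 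Thus $\X$ is closed under tensor products and transposes.

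For (2)$\Rightarrow$(1), I would put $W:=\bigcup_{M\in\X}\nf(M)$. Each $\nf(M)$ is a closed subset of $\s(R)$, so $W$ is a specialization-closed subset of $\s(R)$ and $\serre(W)$ is a Serre subcategory of $\mod R$; the goal is $\X=\res(\serre(W))$, which is precisely statement (1). The inclusion $\X\subseteq\res(\serre(W))$ goes as above: for $M\in\X$, apply Theorem \ref{main} with $\xx$ generating an ideal such that $\v(\xx)=\nf(M)$, and note the resulting Koszul homologies are supported on $\nf(M)\subseteq W$, hence lie in $\serre(W)$. For the reverse inclusion, since $\X$ is resolving it suffices to prove $\serre(W)\subseteq\X$; and since every $N\in\serre(W)$ has a filtration whose subquotients are of the form $R/\p$ with $\p\in\supp N\subseteq W$, and $\X$ is closed under extensions, it is enough to prove the claim: \emph{if $M\in\X$ and $\p\in\nf(M)$, then $R/\p\in\X$.}

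Proving this claim is the crux, and the step I expect to be the main obstacle. My plan for it: first replace $M$ by $\tr M\in\X$ (legitimate since $\nf(\tr M)=\nf(M)$), then pass to a prime $\q\subseteq\p$ minimal in $\nf(M)=\supp(\tr M)$, so that $M_\q$ is a non-free $R_\q$-module that is free on the punctured spectrum of $R_\q$, i.e. a non-projective object of $\mod^\circ(R_\q)$. The heart is to fabricate, from $M$ and the operations available in $\X$, a nonzero object of $\X$ whose localization at $\q$ has finite length, and then to peel it down to $R/\q$. For instance, dualizing a projective presentation of $M$ and tensoring with $M$ realizes $\operatorname{Tor}_1^R(\tr M,M)$ as a kernel in a four-term exact sequence whose other terms are tensor products of $M$ with objects of $\X$, so $\operatorname{Tor}_1^R(\tr M,M)\in\X$; and $\operatorname{Tor}_1^R(\tr M,M)\cong\underline{\operatorname{Hom}}_R(M,M)$ is nonzero and of finite length after localizing at $\q$, as $\q$ is minimal in $\nf(M)$. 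Iterating this, exploiting the torsion that tensor products of such modules produce over the local ring $R_\q$, and splitting off finite-length direct summands, should cut the support down to $\{\q\}$ and yield $R/\q$; Corollary \ref{011103}, applied over $R_\q$ with $M_\q\in\mod^\circ(R_\q)$, is the natural engine here, supplying the finite-length generator and controlling the number of extensions needed. Finally I would bootstrap from minimal primes $\q$ to an arbitrary $\p\in\nf(M)$, using that $\nf(R/\q)=\v(\q)$ for $\q\in\s(R)$; this bootstrap, and the behaviour of $\nf$ and of localization in the presence of zero-divisors, is where I expect the remaining technical difficulty to lie. Granting the claim, $\X=\res(\serre(W))$ is generated by the Serre subcategory $\serre(W)$, proving (2)$\Rightarrow$(1); combining (1)$\Leftrightarrow$(2) with Corollary \ref{301200}(1) then yields the stated one-to-one correspondence.
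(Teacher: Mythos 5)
Your direction \textup{(1)}$\Rightarrow$\textup{(2)} is fine (in fact it can be shortened: once you know $\X=\mod^W(R)$ for a specialization-closed $W$, closure under $\otimes$ and $\tr$ is immediate from $\nf(M\otimes N)\subseteq\nf(M)\cup\nf(N)$ and $\nf(\tr M)=\nf(M)$, via Theorem \ref{190906}(1), without going back through the Koszul homology machinery). Likewise the reduction of \textup{(2)}$\Rightarrow$\textup{(1)} to the claim ``$M\in\X$ and $\p\in\nf(M)$ imply $R/\p\in\X$'' is correct and is also how the underlying argument is organized. But the claim itself --- which you rightly flag as the crux --- is precisely where your argument stops being a proof: the sentences ``Iterating this, \dots splitting off finite-length direct summands, should cut the support down to $\{\q\}$ and yield $R/\q$'' and ``this bootstrap \dots is where I expect the remaining technical difficulty to lie'' are aspirations, not steps. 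There is also a small inaccuracy along the way: $\supp(\tr M)$ is not even well-defined (only $\nf(\tr M)$ is, since $\tr M$ is defined only up to projective summands), so the equality ``$\nf(M)=\supp(\tr M)$'' should be avoided; what you actually use --- that $\q$ minimal in $\nf(M)=\nf(\tr M)$ makes $(\tr M)_\q$ a nonprojective object of $\mod^\circ(R_\q)$ --- is fine.

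The paper does not reprove the claim from scratch; it observes that the key obstruction in the earlier work \cite{crs,crspd} to removing the Cohen--Macaulay local hypothesis was precisely the statement $\mod^{\{\m\}}(R)=\res_{\mod R}(k)$, which is now Corollary \ref{191104}(2), and then traces through \cite[Lemma 2.5, Proposition 3.1, Theorem 3.3, Lemmas 4.4, 4.5, Proposition 4.6, Theorem 4.7]{crs} and \cite[Lemma 3.2, Proposition 3.3]{crspd} to conclude that all those results hold for an arbitrary commutative noetherian ring; this chain is what establishes the equivalence of \textup{(a)}--\textup{(d)} in Corollary \ref{301355}, of which your claim is case \textup{(b)}$\Rightarrow$\textup{(c)}. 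Your Tor/stable Hom sketch is in the same spirit as \cite[Proposition 3.1]{crs}, but to count as a proof you would need to actually carry out that proposition (over a non-CM, non-local base), not just indicate its first step. As it stands, the proposal has a genuine gap at the heart of the \textup{(2)}$\Rightarrow$\textup{(1)} direction.
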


%\noindent
The last assertion of this corollary highly improves the main result of \cite{crs}.
Indeed, it removes the superfluous assumptions that $R$ is local and that $R$ is Cohen--Macaulay.

The organization of this paper is as follows.
In the next Section \ref{Bd} we prepare some fundamental notions.
In Section \ref{Rap} we state and prove the most general result in this paper, which includes Theorem \ref{main}.
In the final Section \ref{gene} we apply the results shown in the preceding section to find out the structure of certain subcategories, and give several results including Corollaries \ref{301200}, \ref{011103} and \ref{301201}.

%%%%%%%%%%%%%%%%%%%%%%%%%%%%%%%%%%%%%%%%%%%%%%%%%%%%%%%%%%%%%%%%%%
\section{Basic definitions}\label{Bd}

This section is devoted to stating the definitions and basic properties of notions which we will {\em freely} use in the later sections.
We begin with our convention.

\begin{conv}
Throughout the present paper, let $R$ be a commutative noetherian ring with identity.
We assume that all $R$-modules are  finitely generated, that all $R$-complexes are homologically bounded, and that all subcategories of categories are full.
\end{conv}

In what follows, $\T$ and $\A$ denote a triangulated category and an abelian category with enough projective objects, respectively.

\begin{dfn}
(1) For a subcategory $\X$ of an additive category $\C$, the {\em additive closure} $\add_\C\X$ of $\X$ is defined to be the smallest subcategory of $\C$ containing $\X$ and closed under finite direct sums and direct summands.\\
(2) A {\em Serre subcategory} of $\A$ is defined to be a subcategory of $\A$ closed under subobjects, quotients and extensions.\\
%The {\em Serre closure} of a subcategory $\X$ of $\A$ is defined as the smallest Serre subcategory of $\A$ containing $\X$, and denoted by $\serre_\A\X$ or simply $\serre\X$.
(3) A {\em thick subcategory} of $\T$ is by definition a triangulated subcategory of $\T$ closed under direct summands.
The {\em thick closure} of a subcategory $\X$ of $\T$ is defined as the smallest thick subcategory of $\T$ containing $\X$, and denoted by $\thick_\T\X$ or simply by $\thick\X$.
When $\X$ consists of a single object $M$, we denote it by $\thick_\T M$ or $\thick M$.\\
(4) We denote by $\proj\A$ the subcategory of $\A$ consisting of projective objects.\\
(5) Let $P=(\cdots \xrightarrow{d_3} P_2 \xrightarrow{d_2} P_1 \xrightarrow{d_1} P_0 \to 0)$ be a projective resolution of $M\in\A$.
Then for each $n>0$ we define the {\em $n$-th syzygy} $\syz^nM$ of $M$ (with respect to $P$) as the image of $d_n$.
This is uniquely determined up to projective summands.\\
(6) We define a {\em resolving subcategory} of $\A$ as a subcategory of $\A$ containing $\proj\A$ and closed under direct summands, extensions and syzygies.
The {\em resolving closure} of a subcategory $\X$ of $\A$ is by definition the smallest resolving subcategory of $\A$ containing $\X$, and denoted by $\res_\A\X$ or simply by $\res\X$.
When $\X$ consists of a single object $M$, we denote it by $\res_\A M$ or $\res M$.\\
(7) Let $X,Y$ be complexes of objects of $\A$.\\
(a) A homomorphism $f:X\to Y$ of complexes is called a {\em quasi-isomorphism} if the induced map $\H_i(f):\H_i(X)\to\H_i(Y)$ on the $i$-th homologies is an isomorphism for all integers $i$.\\
(b) We say that $X$ is {\em equivalent} to $Y$ if there exists a sequence $X=X^0,X^1,\dots,X^n=Y$ of complexes having a quasi-isomorphism between $X^i$ and $X^{i+1}$ for all $0\le i\le n-1$.
Then we write $X\simeq Y$.
\end{dfn}

\begin{rem}
(1) A Serre subcategory is defined for an arbitrary abelian category.\\
(2) A resolving subcategory is usually defined as a subcategory containing the projective objects and closed under direct summands, extensions and kernels of epimorphisms.
This definition and ours are equivalent.\\
(3) Let $\X$ be a resolving subcategory of $\A$.
Let $M$ be an object of $\X$ and $n>0$ an integer.
The $n$-th syzygy of $M$ with respect to {\em some} projective resolution of $M$ is in $\X$ if and only if the $n$-th syzygy of $M$ with respect to {\em every} projective resolution of $M$ is in $\X$.
\end{rem}

We recall the notions of balls in $\T$ and $\A$ introduced in \cite{BV,radius,R}.

\begin{dfn}
(1)(a) For a subcategory $\X$ of $\T$ we denote by $\langle\X\rangle$ the smallest subcategory of $\T$ containing $\X$ that is closed under finite direct sums, direct summands and shifts, i.e., $\langle\X\rangle=\add_\T\{\,X[i]\mid i\in\ZZ,\,X\in\X\,\}$.
When $\X$ consists of a single object $M$, we simply denote it by $\langle M\rangle$.\\
(b) For subcategories $\X,\Y$ of $\T$ we denote by $\X\ast\Y$ the subcategory of $\T$ consisting of objects $M$ which fits into an exact triangle $X \to M \to Y \rightsquigarrow$ in $\T$ with $X\in\X$ and $Y\in\Y$.
We set $\X\diamond\Y=\langle\langle\X\rangle\ast\langle\Y\rangle\rangle$.\\
(c) Let $\C$ be a subcategory of $\T$.
We define the {\it ball of radius $r$ centered at $\C$} as
$$
\langle\C\rangle_r=
\begin{cases}
%\{0\} & (r=0),\\
\langle\C\rangle & (r=1),\\
\langle\C\rangle_{r-1}\diamond\C=\langle\langle\C\rangle_{r-1}\ast\langle\C\rangle\rangle & (r\ge2).
\end{cases}
$$
If $\C$ consists of a single object $M$, then we simply denote it by $\langle M\rangle_r$.
We write $\langle\C\rangle_r^{\T}$ when we should specify that $\T$ is the ground category where the ball is defined.\\
(2)(a) For a subcategory $\X$ of $\A$ we denote by $[\X]$ the smallest subcategory of $\A$ containing $\proj\A$ and $\X$ that is closed under finite direct sums, direct summands and syzygies, i.e., $[\X]=\add_\A(\proj\A\cup\{\,\syz^iX\mid i\ge0,\,X\in\X\,\})$.
When $\X$ consists of a single object $M$, we simply denote it by $[M]$.\\
(b) For subcategories $\X,\Y$ of $\A$ we denote by $\X\circ\Y$ the subcategory of $\A$ consisting of objects $M$ which fits into an exact sequence $0 \to X \to M \to Y \to 0$ in $\A$ with $X\in\X$ and $Y\in\Y$.
We set $\X\bullet\Y=[[\X]\circ[\Y]]$.\\
(c) Let $\C$ be a subcategory of $\A$.
We define the {\it ball of radius $r$ centered at $\C$} as
$$
[\C]_r=
\begin{cases}
%\{0\} & (r=0),\\
[\C] & (r=1),\\
[\C]_{r-1}\bullet\C=[[\C]_{r-1}\circ[\C]] & (r\ge2).
\end{cases}
$$
If $\C$ consists of a single object $M$, then we simply denote it by $[M]_r$.
We write $[\C]_r^\A$ when we should specify that $\A$ is the ground category where the ball is defined.
\end{dfn}

\begin{rem}\cite{BV,radius,R}
(1) Let $\X,\Y,\Z,\C$ be subcategories of $\T$.\\
(a) An object $M\in\T$ belongs to $\X\diamond\Y$ if and only if there is an exact triangle $X \to Z \to Y \rightsquigarrow$ with $X\in\langle\X\rangle$ and $Y\in\langle\Y\rangle$ such that $M$ is a direct summand of $Z$.\\
(b) One has $(\X\diamond\Y)\diamond\Z=\X\diamond(\Y\diamond\Z)$ and $\langle\C\rangle_a\diamond\langle\C\rangle_b=\langle\C\rangle_{a+b}$ for all $a,b>0$.\\
(2) Let $\X,\Y,\Z,\C$ be subcategories of $\A$.\\
(a) An object $M\in\A$ belongs to $\X\bullet\Y$ if and only if there is an exact sequence $0 \to X \to Z \to Y \to 0$ with $X\in[\X]$ and $Y\in[\Y]$ such that $M$ is a direct summand of $Z$.\\
(b) One has $(\X\bullet\Y)\bullet\Z=\X\bullet(\Y\bullet\Z)$ and $[\C]_a\bullet[\C]_b=[\C]_{a+b}$ for all $a,b>0$.
\end{rem}

%Next we recall a singularity category.

\begin{dfn}
An $R$-complex is called {\em perfect} if it is a bounded complex of projective $R$-modules.
The {\em singularity category} $\ds(R)$ of $R$ is defined as the Verdier quotient of $\db(R)$ by the perfect complexes.
For the definition of a Verdier quotient, we refer to \cite[Remark 2.1.9]{N}.
Whenever we discuss the singularity category $\ds(R)$, we identify each object or subcategory of $\mod R$ with its image in $\ds(R)$ by the composition of the canonical functors $\mod R\to\db(R)\to\ds(R)$.
\end{dfn}

\begin{rem}\cite[Lemma 2.4]{sing}
(1) For all $X\in\db(R)$ there exists an exact triangle $P \to X \to M[n] \rightsquigarrow$ in $\db(R)$ such that $P$ is a perfect complex, $M$ is a module and $n$ is an integer.
In particular, $X\cong M[n]$ in $\ds(R)$.\\
(2) For every $M\in\mod R$ and every $n\ge0$ there is an isomorphism $M\cong\syz^nM[n]$ in $\ds(R)$.
Hence, for a subcategory $\C$ of $\mod R$ and an integer $k>0$, each module in ${[\C]}_k^{\mod R}$ belongs to ${\langle\C\rangle}_k^{\ds(R)}$.
\end{rem}

We introduce subcategories which will be investigated in Section \ref{gene}.

\begin{dfn}
Let $\Phi$ be a subset of $\spec R$.
Set $\Phi^{\rm c}=\spec R\setminus\Phi$.
We denote by $\M^\Phi(R)$ (respectively, $\mod^\Phi(R)$) the subcategory of $\mod R$ consisting of $R$-modules $M$ such that $M_\p=0$ (respectively, $M_\p$ is $R_\p$-free) for all $\p\in\Phi^{\rm c}$.
Also, $\db^\Phi(R)$ (respectively, $\ds^\Phi(R)$) denotes the subcategory of $\db(R)$ (respectively, $\ds(R)$) consisting of $R$-complexes $X$ such that $X_\p$ isomorphic to a perfect $R_\p$-complex in $\db(R_\p)$ (respectively, $X_\p\cong0$ in $\ds(R_\p)$) for all $\p\in\Phi^{\rm c}$.
We have that $\M^\Phi(R)$ is a Serre subcategory of $\mod R$, that $\mod^\Phi(R)$ is a resolving subcategory of $\mod R$, and that $\db^\Phi(R), \ds^\Phi(R)$ are thick subcategories of $\db(R),\ds(R)$ respectively.
\end{dfn}

%We recall several sets of prime ideals associated to objects and subcategories.

\begin{dfn}
(1) For an $R$-module $M$ we denote by $\nf(M)$ the {\em nonfree locus} of $M$, that is, the set of prime ideals $\p$ of $R$ such that the $R_\p$-module $M_\p$ is nonfree.
As is well-known, $\nf(M)$ is a closed subset of $\spec R$ in the Zariski topology.\\
(2) For an $R$-complex $M$ we denote by $\ipd(M)$ the {\em infinite projective dimension locus} of $M$, that is, the set of prime ideals $\p$ of $R$ such that the $R_\p$-complex $M_\p$ has infinite projective dimension.\\
(3) For a subcategory $\X$ of $\mod R$ we set $\supp\X=\bigcup_{M\in\X}\supp M$ and $\nf(\X)=\bigcup_{M\in\X}\nf(M)$.\\
(4) For a subcategory $\X$ of $\db(R)$ we set $\ipd(\X)=\bigcup_{M\in\X}\ipd(M)$.\\
(5) For a subcategory $\X$ of $\ds(R)$ we set $\ssupp(\X)=\bigcup_{M\in\X}\ipd(M)$.
\end{dfn}

%Finally, we gather several other basic definitions.

\begin{dfn}
(1) Let $M$ be an $R$-module.\\
(a) Let $\xx$ be a sequence of elements of $R$.
Then $\K(\xx,M)$ denotes the {\em Koszul complex} of $M$ with respect to $\xx$.
We call $\H_i(\xx,M):=\H_i(\K(\xx,M))$ the $i$-th {\em Koszul homology} ($i\in\ZZ$) and $\H(\xx,M):=\bigoplus_{i\in\ZZ}\H_i(\xx,M)$ the {\em Koszul homology} of $M$ with respect to $\xx$.\\
(b) Let $P_1 \xrightarrow{d} P_0 \to M \to 0$ be a projective presentation of $M$.
Then the cokernel of the $R$-dual map of $d$ is called the {\em transpose} of $M$ and denoted by $\tr M$.
This is uniquely determined up to projective summands.\\
(3) A subset $\Phi$ of $\spec R$ is called {\em specialization-closed} if $\v(\p)\subseteq\Phi$ for all $\p\in\Phi$.
This is nothing but a union of closed subsets of $\spec R$ in the Zariski topology.\\
(4) We denote by $\sing R$ the {\em singular locus} of $R$, namely, the set of prime ideals $\p$ of $R$ such that $R_\p$ is not a regular local ring.\\
(5) A local ring $R$ with maximal ideal $\m$ is called an {\em isolated singularity} if $\sing R\subseteq\{\m\}$.
\end{dfn}

%%%%%%%%%%%%%%%%%%%%%%%%%%%%%%%%%%%%%%%%%%%%%%%%%%%%%%%%%%%%%%%%%%
\section{Reconstruction from Koszul homology}\label{Rap}

In this section, we consider reconstructing a given module from its Koszul homology by taking direct summands, extensions and syzygies.
We start by stating and proving the most general result in this paper; actually, almost all of the other results given in this paper are deduced from this.

\begin{thm}\label{181627}
Let $M$ be an $R$-module.
Let $\xx=x_1,\dots,x_n$ be a sequence of elements of $R$ such that $x_p\Ext_R^q(M,\syz^rM)=0$ for all $1\le p\le n$ and $1\le q,r\le p$.
Let $P$ be a projective resolution of $M$.
Then $\K(\xx,M)$ is equivalent to a complex
$$
X=(0 \to X_n \to X_{n-1} \to \cdots \to X_1 \to X_0 \to 0)
$$
such that $X_i=\bigoplus_{j=0}^i{P_j}^{\oplus\binom{n}{i-j}}$ for each $0\le i\le n-1$ and $X_n=\bigoplus_{j=0}^n(\syz^jM)^{\oplus\binom{n}{j}}$.
\end{thm}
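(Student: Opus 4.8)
The plan is to prove Theorem~\ref{181627} by induction on $n$, tracking carefully how the Koszul complex on $x_1,\dots,x_n$ is built from the Koszul complex on $x_1,\dots,x_{n-1}$ via a mapping cone. The base case $n=0$ is trivial: $\K(\varnothing,M)=M$ and the asserted complex is just $X_0=P_0$ with $X_\bullet$ forced to be $M$ in degree $0$... more carefully, for $n=0$ one takes $X=(0\to \syz^0 M\to 0)=(0\to M\to 0)$, which is literally $\K(\varnothing,M)$. For the inductive step, recall the standard identity $\K(x_1,\dots,x_n,M)\cong\operatorname{Cone}\bigl(\K(x_1,\dots,x_{n-1},M)\xrightarrow{x_n}\K(x_1,\dots,x_{n-1},M)\bigr)$. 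By the induction hypothesis, applied with the same projective resolution $P$, we know $\K(x_1,\dots,x_{n-1},M)\simeq Y$, where $Y_i=\bigoplus_{j=0}^i P_j^{\oplus\binom{n-1}{i-j}}$ for $0\le i\le n-2$ and $Y_{n-1}=\bigoplus_{j=0}^{n-1}(\syz^jM)^{\oplus\binom{n-1}{j}}$. The issue is that $\simeq$ is only a zig-zag of quasi-isomorphisms, not an honest isomorphism, so one cannot directly take the cone of $x_n$ acting on $Y$; instead I would argue that the cone of $x_n$ on quasi-isomorphic complexes is again quasi-isomorphic (functoriality of the cone up to the derived category, or an explicit roof argument), reducing to computing the cone $C$ of $x_n\colon Y\to Y$.

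The next step is to identify this cone $C$ and then modify it in its top two degrees to reach the desired shape. Using Pascal's rule $\binom{n}{i-j}=\binom{n-1}{i-j}+\binom{n-1}{i-1-j}$, the cone $C$ of $x_n$ on $Y$ has $C_i=Y_i\oplus Y_{i-1}$, which in degrees $0\le i\le n-2$ gives exactly $\bigoplus_{j=0}^i P_j^{\oplus\binom{n}{i-j}}=X_i$ on the nose, so the lower part of $C$ already matches $X$ without any further work. The problems live in degrees $n-1$ and $n$: there $C_{n-1}=Y_{n-1}\oplus Y_{n-2}$ involves the syzygy modules $\syz^j M$ ($0\le j\le n-1$) instead of the required $X_{n-1}=\bigoplus_{j=0}^{n-1}P_j^{\oplus\binom{n}{n-1-j}}$, and $C_n=Y_{n-1}$ should become $X_n=\bigoplus_{j=0}^n(\syz^jM)^{\oplus\binom{n}{j}}$. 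So the key local move is: given a two-term tail $(\cdots\to C_n\xrightarrow{\partial} C_{n-1}\to \cdots)$ where $C_{n-1}$ contains a direct summand $\syz^jM$ and $C_n=\syz^{j}M$ copies, replace the summand $\syz^j M$ of $C_{n-1}$ by its projective cover $P_j$ (i.e. by a term $P_j$) at the cost of bumping $\syz^jM$ up to $\syz^{j+1}M$ in degree $n$. This is exactly the operation of gluing in the short exact sequence $0\to\syz^{j+1}M\to P_j\to\syz^jM\to 0$ and is realized by a quasi-isomorphism of complexes — provided the relevant connecting maps can be arranged to be compatible, which is where the Ext-vanishing hypothesis $x_p\Ext^q_R(M,\syz^r M)=0$ enters: it guarantees that after passing to $\xx^k$ (or, in this generality, already for $\xx$ satisfying the stated hypothesis) the obstruction classes in $\Ext$ that would prevent splitting/lifting the relevant maps vanish, so the modification can be carried out degree by degree, peeling off $\syz^0M=M$ first, then $\syz^1M$, and so on up to $\syz^{n-1}M$.

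Concretely, I would phrase the inductive modification as a lemma: if a complex $Z=(0\to\syz^aM\to Z_{m-1}\to\cdots)$ has a map $\syz^aM\to Z_{m-1}$ whose composite with the projection to a chosen $\syz^bM$-summand is killed by an element $x$ with $x\Ext^1_R(\syz^bM,\syz^{a+1}M)=0$ (equivalently controlled by $x\Ext^{?}_R(M,\syz^?M)=0$ after dimension-shifting), then $\K(x\cdot-,Z)$, or the relevant twist, is equivalent to the complex obtained by the pushout along $0\to\syz^{b+1}M\to P_b\to\syz^bM\to0$. Iterating this over the $\binom{n-1}{j}$ copies of each $\syz^jM$ appearing in $Y_{n-1}\oplus Y_{n-2}$, and bookkeeping the binomial coefficients via Pascal's identity once more to check $\binom{n-1}{j}+\binom{n-1}{j-1}=\binom{n}{j}$ for the count of $\syz^jM$ in $X_n$, delivers exactly the complex $X$ in the statement. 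The main obstacle I anticipate is precisely this last bookkeeping-plus-Ext step: one must simultaneously (a) keep the quasi-isomorphism honest through a zig-zag rather than a single map, (b) ensure the Ext-vanishing hypothesis is strong enough — note the hypothesis quantifies over $1\le q,r\le p$, and the dimension shift turns $\Ext^q_R(M,\syz^rM)$-statements into the vanishing needed to split maps $\syz^aM\to\syz^bM$ or lift them through projective covers in the right range of $a,b\le n$ — and (c) get the combinatorics of the binomial multiplicities to land on the claimed $X_i$. Everything below degree $n-1$ is automatic from Pascal's rule; the genuine content is the controlled surgery in the top two degrees, so I would isolate that as the technical heart of the argument and present the rest as routine induction.
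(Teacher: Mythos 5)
Your proposal follows essentially the same route as the paper's proof: induction on $n$, passing from $\K(x_1,\dots,x_{n-1},M)$ to $\K(\xx,M)$ by tensoring with $\K(x_n,R)$ (the mapping cone of $x_n$), matching the lower degrees by Pascal's rule, and then performing a surgery in degrees $n-1$ and $n$ to trade the syzygy modules for projectives, enabled by the Ext-vanishing hypothesis. The one point where the paper is tighter than your sketch is the surgery itself: instead of your iterative, summand-by-summand replacement of the $\syz^jM$-factors (where, as you note, compatibility after the first peel is the obstacle — the differential out of degree $n$ is no longer just $\pm x_n$ on each syzygy factor after one modification), the paper does the entire replacement in a single pullback of $0\to\syz Y_{n-1}\to Q\to Y_{n-1}\to0$ along the top differential $g=\binom{(-1)^{n-1}x_n}{f}$, noting that because the $f$-component of $g$ lands in the projective $Y_{n-2}$, the resulting extension class lives in $\Ext^1_R(Y_{n-1},\syz Y_{n-1})\cong\bigoplus_{j,k}\Ext^{j+1}_R(M,\syz^{k+1}M)^{\oplus(\cdots)}$, which $x_n$ annihilates by hypothesis, so the pullback splits in one shot.
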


\begin{proof}
We prove the theorem by induction on $n$.
Let us first consider the case where $n=1$.
Multiplication by $x_1$ makes a pullback diagram:
$$
\begin{CD}
\phantom{x_1}\sigma:\quad @. 0 @>>> \syz M @>>> P_0 @>>> M @>>> 0\phantom{.} \\
@. @. @| @AAA @A{x_1}AA \\
x_1\sigma:\quad @. 0 @>>> \syz M @>>> N @>>> M @>>> 0.
\end{CD}
$$
Since $x_1\Ext_R^1(M,\syz M)=0$, we see that the exact sequence $x_1\sigma$ splits and get an isomorphism $N\cong\syz M\oplus M$.
Thus we obtain a short exact sequence of complexes
$$
0 \to W \to X \to \K(x_1,M) \to 0,
$$
where $W=(0 \to \syz M \xrightarrow{=} \syz M \to 0)$ and $X=(0 \to \syz M\oplus M \to P_0 \to 0)$.
As $W$ is acyclic, $\K(x_1,M)$ is equivalent to $X$.

Next, we assume $n\ge2$.
The induction hypothesis implies that $\K(x_1,\dots,x_{n-1},M)$ is equivalent to a complex
$$
Y=(0 \to Y_{n-1} \xrightarrow{f} Y_{n-2} \to \cdots \to Y_1 \to Y_0 \to 0)
$$
with $Y_i=\bigoplus_{j=0}^i{P_j}^{\oplus\binom{n-1}{i-j}}$ for $0\le i\le n-2$ and $Y_{n-1}=\bigoplus_{j=0}^{n-1}(\syz^jM)^{\oplus\binom{n-1}{j}}$.
In general, taking a tensor product with a perfect complex preserves equivalence of complexes (cf. \cite[(A.4.1)]{C}).
Hence we have
\begin{align*}
&\K(\xx,M)
=\K(x_1,\dots,x_{n-1},M)\otimes_R\K(x_n,R)
\simeq Y\otimes_R\K(x_n,R)\\
&=(0 \to Y_{n-1} \xrightarrow{g} Y_{n-1}\oplus Y_{n-2} \xrightarrow{d_{n-1}} Y_{n-2}\oplus Y_{n-3} \xrightarrow{d_{n-2}} \cdots \xrightarrow{d_2} Y_1\oplus Y_0 \xrightarrow{d_1} Y_0 \to 0)=:Z,
\end{align*}
where $g=\binom{(-1)^{n-1}x_n}{f}$.
Note that there is an exact sequence $0 \to \syz Y_{n-1} \to Q \xrightarrow{\pi} Y_{n-1} \to 0$ with $Q=\bigoplus_{j=0}^{n-1}{P_j}^{\oplus\binom{n-1}{j}}$.
Consider the pullback diagram
$$
\begin{CD}
\phantom{g^\ast()}\tau:\quad @. 0 @>>> \syz Y_{n-1} @>>> Q\oplus Y_{n-2} @>{h}>> Y_{n-1}\oplus Y_{n-2} @>>> 0\phantom{,} \\
@. @. @| @AAA @A{g}AA \\
g^\ast(\tau):\quad @. 0 @>>> \syz Y_{n-1} @>>> L @>>> Y_{n-1} @>>> 0,
\end{CD}
$$
where $h=\left(\begin{smallmatrix}
\pi & 0 \\
0 & 1
\end{smallmatrix}\right)$ and $g^\ast=\Ext_R^1(g,\syz Y_{n-1})$.
As $Y_{n-2}$ is projective, the map $g^\ast$ can be identified with the multiplication map $\Ext_R^1(Y_{n-1},\syz Y_{n-1})\xrightarrow{(-1)^{n-1}x_n}\Ext_R^1(Y_{n-1},\syz Y_{n-1})$.
There are isomorphisms
\begin{align*}
\Ext_R^1(Y_{n-1},\syz Y_{n-1})
&\textstyle\cong\bigoplus_{j,k=0}^{n-1}\Ext_R^1(\syz^jM,\syz(\syz^kM))^{\oplus\left(\binom{n-1}{j}+\binom{n-1}{k}\right)}\\
&\textstyle\cong\bigoplus_{j,k=0}^{n-1}\Ext_R^{j+1}(M,\syz^{k+1}M)^{\oplus\left(\binom{n-1}{j}+\binom{n-1}{k}\right)},
\end{align*}
and hence $x_n$ annihilates $\Ext_R^1(Y_{n-1},\syz Y_{n-1})$.
Therefore $g^\ast(\tau)$ is a split exact sequence, and we obtain a commutative diagram
$$
\begin{CD}
0 @>>> \syz Y_{n-1} @>>> Q\oplus Y_{n-2} @>h>> Y_{n-1}\oplus Y_{n-2} @>>> 0\phantom{,} \\
@. @| @AlAA @AgAA \\
0 @>>> \syz Y_{n-1} @>>> \syz Y_{n-1}\oplus Y_{n-1} @>>> Y_{n-1} @>>> 0,
\end{CD}
$$
with exact rows.
We observe that the complex $Z$ is equivalent to the complex
$$
X=(0 \to \syz Y_{n-1}\oplus Y_{n-1} \xrightarrow{l} Q\oplus Y_{n-2} \xrightarrow{d_{n-1}h} Y_{n-2}\oplus Y_{n-3} \xrightarrow{d_{n-2}} \cdots \xrightarrow{d_2} Y_1\oplus Y_0 \xrightarrow{d_1} Y_0 \to 0).
$$
There are equalities $\syz Y_{n-1}\oplus Y_{n-1}=\bigoplus_{j=0}^n(\syz^jM)^{\oplus\binom{n}{j}}$, $Q\oplus Y_{n-2}=\bigoplus_{j=0}^{n-1}{P_j}^{\oplus\binom{n}{(n-1)-j}}$, $Y_i\oplus Y_{i-1}=\bigoplus_{j=0}^i{P_j}^{\oplus\binom{n}{i-j}}$ for $1\le i\le n-2$ and $Y_0=P_0$.
Thus we are done.
\end{proof}

Using Theorem \ref{181627}, we obtain the following corollary.

\begin{cor}\label{241142}
Let $M$ and $\xx$ be as in Theorem \ref{181627}.
\begin{enumerate}[\rm(1)]
\item
If $\xx$ is a regular sequence on $M$, then $\syz^n(M/\xx M)\cong\bigoplus_{k=0}^n(\syz^kM)^{\oplus\binom{n}{k}}$ in $\mod R$.
\item
For each $1\le i\le n$ there exists an exact sequence of $R$-modules
$$
0 \to \H_i(\xx,M) \to E_i \to \syz E_{i-1} \to 0
$$
with $E_0=\H_0(\xx,M)$ such that $M$ is a direct summand of $E_n$.
Hence $M$ is built out of $\H_0(\xx,M),\dots,\H_n(\xx,M)$ by taking $n$ syzygies, $n$ extensions and $1$ direct summand.
In particular, $M$ belongs to the ball $[\H(\xx,M)]_{n+1}^{\mod R}$.
\item
There is an exact triangle
$$
\textstyle
F \to \K(\xx,M) \to \bigoplus_{k=0}^n(\syz^kM)^{\oplus\binom{n}{k}}[n] \rightsquigarrow
$$
in $\db(R)$, where $F=(0 \to F_{n-1} \to \cdots \to F_0 \to 0)$ is a perfect complex.
\item
The module $M$ belongs to the ball ${\langle R\oplus\K(\xx,M)\rangle}_{n+1}^{\db(R)}$.
\item
One has $\K(\xx,M)\cong\bigoplus_{k=0}^nM^{\oplus\binom{n}{k}}[k]$ in $\ds(R)$.
In particular, $M$ is a direct summand of $\K(\xx,M)$ in $\ds(R)$.
\end{enumerate}
\end{cor}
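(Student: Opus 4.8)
The plan is to extract all five parts from Theorem~\ref{181627}, which presents $\K(\xx,M)$ as a complex $X=(0\to X_n\to X_{n-1}\to\cdots\to X_0\to0)$ with $X_0,\dots,X_{n-1}$ projective and $X_n=\bigoplus_{k=0}^n(\syz^kM)^{\oplus\binom nk}$, and with $X\simeq\K(\xx,M)$. Since equivalent complexes have the same homology, $\H_i(X)=\H_i(\xx,M)$ for all $i$; and $M$, being the $k=0$ summand of $X_n$, is a direct summand of $X_n$. I would first prove (2) by peeling off the Koszul homologies one homological degree at a time, then deduce (1) from the resulting sequences, obtain (3) from a single brutal truncation of $X$, and read (4) and (5) off (3).

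For part (2), set $E_i=\operatorname{coker}(X_{i+1}\to X_i)$ for $0\le i\le n$, so $E_0=\H_0(\xx,M)$ and $E_n=X_n$. Writing $X_{\ge i}=(0\to X_n\to\cdots\to X_i\to0)$ for the brutal truncation of $X$, there is a short exact sequence of complexes $0\to X_i[i]\to X_{\ge i}\to X_{\ge i+1}\to0$. Its long exact homology sequence, together with the identifications $\H_i(X_{\ge i})=E_i$, $\H_{i+1}(X_{\ge i})=\H_{i+1}(\xx,M)$ and $\H_{i+1}(X_{\ge i+1})=E_{i+1}$, produces an exact sequence $0\to\H_{i+1}(\xx,M)\to E_{i+1}\to X_i\to E_i\to0$. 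The image of $E_{i+1}\to X_i$ is $\ker(X_i\twoheadrightarrow E_i)$, which for $0\le i\le n-1$ is a first syzygy of $E_i$ because $X_i$ is projective; hence the left half of this four-term sequence is exactly a short exact sequence $0\to\H_{i+1}(\xx,M)\to E_{i+1}\to\syz E_i\to0$. Reindexing, these are the desired sequences $0\to\H_j(\xx,M)\to E_j\to\syz E_{j-1}\to0$ for $1\le j\le n$. Starting from $E_0=\H_0(\xx,M)$ and iterating, an induction on $j$ places $E_j$ in the ball $[\H_0(\xx,M)\oplus\cdots\oplus\H_j(\xx,M)]_{j+1}^{\mod R}$; since $M$ is a direct summand of $E_n=X_n$, we conclude $M\in[\H(\xx,M)]_{n+1}^{\mod R}$, built with $n$ syzygies, $n$ extensions and one direct summand.

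Part (1) is then immediate: when $\xx$ is a regular sequence on $M$ we have $\H_j(\xx,M)=0$ for $j\ge1$, so the sequences of (2) collapse to isomorphisms $E_j\cong\syz E_{j-1}$, and chaining them gives $\bigoplus_{k=0}^n(\syz^kM)^{\oplus\binom nk}=X_n=E_n\cong\syz^nE_0=\syz^n(M/\xx M)$. For (3), take instead the brutal truncation $0\to F\to X\to X_n[n]\to0$, where $F=(0\to X_{n-1}\to\cdots\to X_0\to0)$ is a bounded complex of projectives, hence perfect; the resulting exact triangle in $\db(R)$, after identifying $X$ with $\K(\xx,M)$, is the triangle claimed in (3). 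For (4), recall that a perfect complex concentrated in $n$ consecutive degrees lies in $\langle R\rangle_n$ (iterate brutal truncations); rotating the triangle of (3) then places $\bigoplus_{k=0}^n(\syz^kM)^{\oplus\binom nk}[n]$ in $\langle\K(\xx,M)\rangle\diamond\langle R\rangle_n\subseteq\langle R\oplus\K(\xx,M)\rangle_1\diamond\langle R\oplus\K(\xx,M)\rangle_n=\langle R\oplus\K(\xx,M)\rangle_{n+1}$, and since $M$ is a direct summand of $\bigoplus_{k=0}^n(\syz^kM)^{\oplus\binom nk}$ and balls are closed under shifts and direct summands, $M\in\langle R\oplus\K(\xx,M)\rangle_{n+1}^{\db(R)}$. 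For (5), apply the Verdier localization $\db(R)\to\ds(R)$ to the triangle of (3): it kills the perfect complex $F$, giving $\K(\xx,M)\cong\bigoplus_{k=0}^n(\syz^kM)^{\oplus\binom nk}[n]$ in $\ds(R)$; now $\syz^kM\cong M[-k]$ in $\ds(R)$ by \cite[Lemma 2.4]{sing}, so substituting and reindexing $k\mapsto n-k$ (using $\binom n{n-k}=\binom nk$) yields $\K(\xx,M)\cong\bigoplus_{k=0}^nM^{\oplus\binom nk}[k]$, which has $M$ (the $k=0$ summand) as a direct summand.

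The only genuinely delicate step is part (2): one must carry out the brutal-truncation long exact sequences with care, correctly identify the homology groups that appear, and recognize the connecting kernels as honest syzygies --- the last point being exactly where projectivity of $X_0,\dots,X_{n-1}$ is used --- and then track the radii in the ball computation. Everything else reduces formally to Theorem~\ref{181627}, brutal truncation, and the standard behaviour of the Verdier quotient $\db(R)\to\ds(R)$.
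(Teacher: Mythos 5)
Your proposal is correct and follows essentially the same route as the paper: all five parts are read off from the complex $X$ of Theorem~\ref{181627} via brutal truncations, the long exact homology sequences, and the passage to the Verdier quotient. The only (inessential) deviation is that you deduce part (1) from part (2) by observing that the short exact sequences collapse to isomorphisms $E_j\cong\syz E_{j-1}$ when the higher Koszul homologies vanish, whereas the paper proves (1) directly by noting that the complex $0\to X_n\to\cdots\to X_0\to M/\xx M\to 0$ is exact; both arguments rest on the same underlying observation and yield the identical conclusion.
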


\begin{proof}
We use the notation of Theorem \ref{181627} and its assertion.

(1) Since $\xx$ is regular on $M$, we have an equivalence $\K(\xx,M)\simeq M/\xx M$.
There is an exact sequence
$$
0 \to X_n \to X_{n-1} \to \cdots \to X_0 \to M/\xx M \to 0
$$
of $R$-modules.
As $X_n=\bigoplus_{j=0}^n(\syz^jM)^{\oplus\binom{n}{j}}$ and $X_i$ is projective for all $0\le i\le n-1$, the module $X_n$ is the $n$-th syzygy of $M/\xx M$ as an $R$-module.

(2) 
For each $0\le i\le n$ take a truncation $X^i=(0 \to X_n \to \cdots \to X_{i+1} \to X_i \to 0)$ of $X$ with $(X^i)_j=X_{i+j}$ for $0\le j\le n$.
Then there is a short exact sequence
$$
0 \to X_{i-1} \to X^{i-1} \to X^i[1] \to 0
$$
of complexes for each $1\le i\le n$.
The homology long exact sequence gives an exact sequence $0 \to \H_1(X^{i-1}) \to \H_0(X^i) \to X_{i-1} \to \H_0(X^{i-1}) \to 0$ of modules.
As $X_{i-1}$ is projective, we have an exact sequence
$$
0 \to \H_1(X^{i-1}) \to \H_0(X^i) \to \syz\H_0(X^{i-1}) \to 0
$$
for all $1\le i\le n$.
Notice $\H_1(X^{i-1})=\H_i(\xx,M)$, $\H_0(X^0)=\H_0(\xx,M)$ and $\H_0(X^n)=X_n$.
Setting $E_i=\H_0(X^i)$ for $0\le i\le n$, we obtain desired exact sequences.

(3) Truncating the complex $X$ provides such an exact triangle.

(4) Decomposing $F$ into short exact sequences of complexes, we observe that $F$ is in ${\langle R\rangle}_n^{\db(R)}$.
As $M$ is a direct summand of $\bigoplus_{k=0}^n(\syz^kM)^{\oplus\binom{n}{k}}$, the assertion follows from (3).

(5) By (3) we have an isomorphism $\K(\xx,M)\cong\bigoplus_{k=0}^n(\syz^kM)^{\oplus\binom{n}{k}}[n]$ in $\ds(R)$.
Since $M\cong\syz^kM[k]$ in $\ds(R)$, we are done.
\end{proof}

\begin{rem}
(1) Corollary \ref{241142}(1) is a refinement of \cite[Proposition 2.2]{stcm}, which shows the same conclusion under the additional assumption that $\xx$ is a regular sequence on $R$ annihilating more Ext modules.\\
(2) Corollary \ref{241142}(5) can also be shown by using the proof of \cite[Proposition 2.3]{sing}.
It also implies that $M$ belongs to ${\langle R\oplus\K(\xx,M)\rangle}_m^{\db(R)}$ for {\em some} integer $m>0$.
However, it cannot determine how big/small $m$ is, while Corollary \ref{241142}(4) can.
\end{rem}

We are interested in existence of a sequence $\xx$ as in Theorem \ref{181627}.
The lemma below guarantees that such a sequence always exists.
Moreover, one can make such a sequence as a power of an arbitrary sequence whose defining closed subset covers the nonfree locus.

\begin{lem}\label{181842}
Let $M$ be an $R$-module.
Let $\xx=x_1,\dots,x_n$ be a sequence of elements of $R$ with $\nf(M)\subseteq\v(\xx)$.
Then there exists an integer $k>0$ such that the sequence $\xx^k=x_1^k,\dots,x_n^k$ annihilates $\Ext_R^i(M,N)$ for all $i>0$ and all $N\in\mod R$.
\end{lem}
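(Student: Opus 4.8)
The plan is to reduce to a single element and a single $\Ext$-module, and then to exploit the fact that, after passing to a suitable power, $x_j$ times the identity of $M$ factors through a projective module. First I would note that $\v(\xx)=\v((x_1,\dots,x_n))=\bigcap_{j=1}^n\v(x_j)$, so the hypothesis $\nf(M)\subseteq\v(\xx)$ forces $\nf(M)\subseteq\v(x_j)$ for every $j$. Hence it suffices to prove the following: for each $x\in R$ with $\nf(M)\subseteq\v(x)$ there is an integer $c>0$ with $x^c\Ext_R^i(M,N)=0$ for all $i>0$ and all $N\in\mod R$. Granting this, pick $c_j$ for $x_j$ and set $k=\max\{c_1,\dots,c_n\}$; since $x_j^k$ is a multiple of $x_j^{c_j}$, the sequence $\xx^k$ annihilates $\Ext_R^i(M,N)$ for all $i>0$ and all $N$.

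So fix such an $x$. For every prime $\p\in\d(x)=\spec R\setminus\v(x)$ we have $\p\notin\nf(M)$, hence $M_\p$ is $R_\p$-free; therefore $M_x$ is a finitely generated, locally free module over the noetherian ring $R_x$, and so $M_x$ is $R_x$-projective. Because $R\to R_x$ is flat and $M$ is finitely presented, $\Ext$ commutes with this localization, so $\Ext_R^1(M,\syz M)\otimes_R R_x\cong\Ext_{R_x}^1(M_x,(\syz M)_x)=0$. As $\Ext_R^1(M,\syz M)$ is a finitely generated $R$-module, it is annihilated by some power $x^c$.

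Next I would reuse the pullback argument from the proof of Theorem \ref{181627}. Take the short exact sequence $\sigma\colon 0\to\syz M\to P_0\to M\to 0$ associated with a projective resolution of $M$; it represents a class $\eta\in\Ext_R^1(M,\syz M)$. Since $x^c\eta=0$, the pullback $x^c\sigma$ of $\sigma$ along $x^c\cdot\mathrm{id}_M$ splits, and a splitting produces homomorphisms $s\colon M\to P_0$ and $t\colon P_0\to M$ with $t\circ s=x^c\cdot\mathrm{id}_M$; that is, $x^c\cdot\mathrm{id}_M$ factors through the projective module $P_0$.

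Finally, applying $\Ext_R^i(-,N)$ to the factorization $x^c\cdot\mathrm{id}_M=t\circ s$ shows that multiplication by $x^c$ on $\Ext_R^i(M,N)$ factors through $\Ext_R^i(P_0,N)=0$ for $i>0$; hence $x^c\Ext_R^i(M,N)=0$ for all $i>0$ and all $N$, as desired. The step that does the real work is the production of a single power $x^c$ that is uniform in both $i$ and $N$: a priori each $\Ext_R^i(M,N)$ is only known to be killed by some power of $x$ depending on $i$ and $N$, and factoring $x^c\cdot\mathrm{id}_M$ through a projective is exactly what promotes this to uniformity at once. The remaining points — flat base change for $\Ext$ (valid since $M$ is finitely presented over the noetherian ring $R$) and the fact that $M_x$ is genuinely $R_x$-projective rather than merely locally free — are routine.
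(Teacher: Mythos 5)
Your proof is correct, but it takes a different route from the paper. The paper's argument is very short: it picks an ideal $I$ with $\nf(M)=\v(I)$, cites an external result (\cite[Remark~5.2(1)]{dim}) giving a power $p$ with $I^p\Ext_R^i(M,N)=0$ uniformly in $i>0$ and $N$, then uses $\v(I)\subseteq\v(\xx)$ to find $q$ with $(\xx^q)\subseteq I$ and sets $k=pq$. You instead give a self-contained argument: reduce to a single element $x$ with $\nf(M)\subseteq\v(x)$, observe that $M_x$ is projective over $R_x$ so that the finitely generated module $\Ext_R^1(M,\syz M)$ vanishes after inverting $x$ and hence is killed by some $x^c$, and then run the pullback trick to factor $x^c\cdot\mathrm{id}_M$ through $P_0$, which kills all $\Ext_R^i(M,-)$ for $i>0$ at one stroke. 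This is essentially an unwound proof of the cited remark, specialized to what the lemma needs, and is a standard way to get the crucial uniformity in $i$ and $N$: all your steps (flat base change for $\Ext$ over a noetherian ring with $M$ finitely presented, locally free plus finitely presented implies projective, $x^c\eta=0$ forces the pullback $x^c\sigma$ to split, and contravariance of $\Ext^i(-,N)$ turning the factorization $x^c\mathrm{id}_M=t\circ s$ into a factorization of multiplication by $x^c$ through $\Ext^i(P_0,N)=0$) are correct. What the paper's citation buys is brevity; what your version buys is a fully explicit, elementary argument that makes the mechanism of uniformity visible, which is a genuine expository gain for a reader who does not want to chase the reference.
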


\begin{proof}
Let $I$ be an ideal of $R$ with $\nf(M)=\v(I)$.
Then by \cite[Remark 5.2(1)]{dim} there exists an integer $p>0$ such that $I^p\Ext_R^i(M,N)=0$ for all $i>0$ and all $N\in\mod R$.
By assumption, we have $(\xx^q)\subseteq I$ for some $q>0$.
Setting $k=pq$ completes the proof.
\end{proof}

Combining Theorem \ref{181627}, Corollary \ref{241142}(2) and Lemma \ref{181842}, we immediately obtain the following result, which includes Theorem \ref{main}.

\begin{cor}\label{231851}
Let $M$ be an $R$-module.
Let $\xx=x_1,\dots,x_n$ be a sequence of elements of $R$ with $\nf(M)\subseteq\v(\xx)$.
Then there exists an integer $k>0$ such that $\K(\xx^k,M)$ is equivalent to a complex
$$
(0 \to N \to P_{n-1} \to \cdots \to P_0 \to 0),
$$
where each $P_i$ is projective and $M$ is a direct summand of $N$.
Hence, $M$ is built out of $\H_0(\xx^k,M),\dots,\H_n(\xx^k,M)$ by taking $n$ syzygies, $n$ extensions and $1$ direct summand.
In particular, $M$ is in $[\H(\xx^k,M)]_{n+1}^{\mod R}$.
\end{cor}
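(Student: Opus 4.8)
The plan is to assemble the three preceding results with essentially no new work. First I would invoke Lemma \ref{181842} with the given sequence $\xx$: its hypothesis $\nf(M)\subseteq\v(\xx)$ is exactly what the lemma requires, so it produces an integer $k>0$ for which $\xx^k=x_1^k,\dots,x_n^k$ annihilates every $\Ext_R^i(M,N)$ with $i>0$ and $N\in\mod R$. Specializing $N$ to the syzygies $\syz^rM$, this says in particular that $x_p^k\Ext_R^q(M,\syz^rM)=0$ for all $1\le p\le n$ and $1\le q,r\le p$, so the power $\xx^k$ satisfies the hypothesis of Theorem \ref{181627}.

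Next I would apply Theorem \ref{181627} to the sequence $\xx^k$ together with a chosen projective resolution $P$ of $M$. This yields an equivalence of $\K(\xx^k,M)$ with a complex $X=(0\to X_n\to\cdots\to X_0\to0)$ whose terms $X_0,\dots,X_{n-1}$ are finite direct sums of copies of the $P_j$ (hence projective) and whose top term is $X_n=\bigoplus_{j=0}^n(\syz^jM)^{\oplus\binom{n}{j}}$. Setting $N:=X_n$ and $P_i:=X_i$, the binomial coefficient $\binom{n}{0}=1$ shows that $M=\syz^0M$ occurs as a direct summand of $N$; this is precisely the first displayed assertion of the corollary.

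Finally, for the ``built out of'' statement and the ball membership I would simply quote Corollary \ref{241142}(2), applied with $\xx$ replaced by $\xx^k$: it supplies the exact sequences $0\to\H_i(\xx^k,M)\to E_i\to\syz E_{i-1}\to0$ for $1\le i\le n$, with $E_0=\H_0(\xx^k,M)$ and $M$ a direct summand of $E_n$, which records exactly $n$ extensions, $n$ syzygies and one direct summand, and hence places $M$ in $[\H(\xx^k,M)]_{n+1}^{\mod R}$.

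Since every ingredient is already available, there is no real obstacle here; the only point deserving a line of care is the verification that the conclusion of Lemma \ref{181842} (annihilation of $\Ext_R^i(M,N)$ for all $i>0$ and all $N$) does imply the more restrictive-looking hypothesis of Theorem \ref{181627}, which it does because that hypothesis only demands annihilation in the special cases $N=\syz^rM$ with $q,r\le p$.
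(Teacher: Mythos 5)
Your proposal is correct and follows exactly the route the paper intends: invoke Lemma \ref{181842} to produce the power $k$, feed $\xx^k$ into Theorem \ref{181627} to obtain the explicit complex (observing that the $\binom{n}{0}=1$ summand $\syz^0M=M$ sits inside $X_n$), and then cite Corollary \ref{241142}(2) for the counting and the ball membership. The paper's own proof is simply the sentence ``Combining Theorem \ref{181627}, Corollary \ref{241142}(2) and Lemma \ref{181842}, we immediately obtain the following result,'' so you have supplied precisely the details it leaves implicit.
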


%%%%%%%%%%%%%%%%%%%%%%%%%%%%%%%%%%%%%%%%%%%%%%%%%%%%%%%%%%%%%%%%%%
\section{Generation of subcategories}\label{gene}

In this section, we apply our results obtained in the previous section to investigate generation of subcategories.
To be precise, for a subset $\Phi$ of $\spec R$ we analyze the structure of the subcategories $\mod^\Phi(R)$, $\db^\Phi(R)$ and $\ds^\Phi(R)$.
We also consider classification of these subcategories.

First of all, we want to make a generator of $\mod^\Phi(R)$ as a resolving subcategory of $\mod R$ and generators of $\db^\Phi(R), \ds^\Phi(R)$ as thick subcategories of $\db(R),\ds(R)$.
In fact, $\M^\Phi(R)$ gives generators of these three subcategories:

\begin{thm}\label{190906}
Let $\Phi$ be a subset of $\spec R$.
Then one has equalities
\begin{align}
\mod^\Phi(R)&=\res_{\mod R}(\M^\Phi(R)),\\
\db^\Phi(R)&=\thick_{\db(R)}(\{R\}\cup\M^\Phi(R)),\\
\ds^\Phi(R)&=\thick_{\ds(R)}(\M^\Phi(R)).
\end{align}
\end{thm}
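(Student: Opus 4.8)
The plan is to prove the three equalities in the order stated, deducing the hard inclusions of the second and third from the first. In every case the inclusion ``$\supseteq$'' is immediate: a module in $\M^\Phi(R)$ is locally zero off $\Phi$, hence a fortiori locally free off $\Phi$ and locally zero in the singularity category off $\Phi$, so $\M^\Phi(R)\subseteq\mod^\Phi(R)$, $\M^\Phi(R)\subseteq\ds^\Phi(R)$ and $\{R\}\cup\M^\Phi(R)\subseteq\db^\Phi(R)$ (note $R$ is perfect everywhere); since $\mod^\Phi(R)$ is resolving and $\db^\Phi(R),\ds^\Phi(R)$ are thick, as recorded in Section~\ref{Bd}, they contain the corresponding closures. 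For the nontrivial inclusion of the first equality, take $M\in\mod^\Phi(R)$, so that $\nf(M)\subseteq\Phi$; as $R$ is noetherian the closed set $\nf(M)$ has the form $\v(\xx)$ for a finite sequence $\xx=x_1,\dots,x_n$. If $\p\notin\v(\xx)$ then some $x_j$ is a unit in $R_\p$, so $\K(\xx^k,R)_\p$ is contractible, whence $\H_i(\xx^k,M)_\p=0$ for all $i$ and all $k$; thus $\supp\H_i(\xx^k,M)\subseteq\v(\xx)=\nf(M)\subseteq\Phi$, i.e.\ each $\H_i(\xx^k,M)\in\M^\Phi(R)$. Applying Corollary~\ref{231851} to this $\xx$ gives a $k>0$ with $M\in[\H(\xx^k,M)]_{n+1}^{\mod R}$, and since $\res_{\mod R}(\M^\Phi(R))$ is a resolving subcategory containing every $\H_i(\xx^k,M)$, it contains $M$.

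The engine for the remaining two equalities is a lemma I would isolate first: \emph{if $M$ is a module with $\ipd(M)\subseteq\Phi$, then $\syz^NM\in\mod^\Phi(R)$ for all $N\gg0$.} Since localization commutes with taking syzygies up to free summands, the closed sets $\nf(\syz^iM)$ form a descending chain in $\spec R$; because $\spec R$ is a noetherian topological space this chain stabilizes, say to $Z=\nf(\syz^NM)$. On one hand $\ipd(M)=\ipd(\syz^iM)\subseteq\nf(\syz^iM)$ for every $i$, so $\ipd(M)\subseteq Z$. On the other, if $\p\in Z$ then $(\syz^iM)_\p$ is nonfree for every $i\ge N$, which is impossible when $\pd_{R_\p}M_\p<\infty$; hence $Z\subseteq\ipd(M)$. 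Therefore $\nf(\syz^NM)=\ipd(M)\subseteq\Phi$, i.e.\ $\syz^NM\in\mod^\Phi(R)$. (As a byproduct, $\ipd(M)$ is closed.)

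Granting the lemma, I would argue the second equality as follows. Given $X\in\db^\Phi(R)$, use the reduction recorded in Section~\ref{Bd} to obtain a triangle $P\to X\to M[n]\rightsquigarrow$ in $\db(R)$ with $P$ perfect and $M$ a module. Localizing at a prime $\p\notin\Phi$ and using that $X_\p$ and $P_\p$ are perfect forces $M_\p$ to have finite projective dimension, so $\ipd(M)\subseteq\Phi$. By the lemma and the first equality, $M':=\syz^NM\in\mod^\Phi(R)=\res_{\mod R}(\M^\Phi(R))$; since under the canonical functor to $\db(R)$ syzygies and extensions in $\mod R$ become triangles and projective modules land in $\thick_{\db(R)}R$, the subcategory $\res_{\mod R}(\M^\Phi(R))$ is sent into $\thick_{\db(R)}(\{R\}\cup\M^\Phi(R))$, so $M'$ lies there. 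The exact sequences $0\to\syz^{i+1}M\to P_i\to\syz^iM\to0$ for $0\le i<N$ then place $M$ in $\thick_{\db(R)}(\{R,M'\})$, and finally $P\in\thick_{\db(R)}R$ together with the triangle gives $X\in\thick_{\db(R)}(\{R\}\cup\M^\Phi(R))$. For the third equality the reasoning is parallel in $\ds(R)$: reduce $X$ to a module $M$ with $X\cong M[n]$ in $\ds(R)$, obtain $\ipd(M)\subseteq\Phi$ by localizing, pass to $M':=\syz^NM\in\res_{\mod R}(\M^\Phi(R))$, and deduce $M'\in\thick_{\ds(R)}(\M^\Phi(R))$ from the inclusion ${[\C]}_k^{\mod R}\subseteq{\langle\C\rangle}_k^{\ds(R)}$ of Section~\ref{Bd}; since $M\cong M'[N]$ in $\ds(R)$ and thick subcategories are closed under shifts, $X\cong M[n]\in\thick_{\ds(R)}(\M^\Phi(R))$.

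I expect the main obstacle to be the syzygy-stabilization lemma, and in particular the verification that the descending chain $\{\nf(\syz^iM)\}_i$ stabilizes to exactly $\ipd(M)$: this is the point at which noetherianity of the \emph{space} $\spec R$ enters, and it is what allows one to trade an arbitrary module for a high syzygy whose nonfree locus is contained in $\Phi$. The supporting facts (localization commuting with syzygies and Koszul complexes, the support of Koszul homology lying in $\v(\xx)$, and perfect complexes constituting $\thick_{\db(R)}R$) are standard, and everything else is bookkeeping within the closure/ball formalism of Section~\ref{Bd}.
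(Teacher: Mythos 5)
Your proof is correct, and for the first equality it coincides with the paper's argument (Koszul homology via Corollary~\ref{231851}, support in $\v(\xx)\subseteq\Phi$). Where you genuinely diverge is in the engine that lets one replace a module $M$ with $\ipd(M)\subseteq\Phi$ by a high syzygy lying in $\mod^\Phi(R)$. The paper invokes the finiteness of the large restricted flat dimension $\Rfd_RM$ (Avramov--Iyengar--Lipman \cite[Theorem~1.1]{AIL}) to produce a single integer $r$ with $\pd_{R_\p}M_\p\le r$ for every $\p\in\Phi^{\rm c}$, so that $\syz^rM\in\mod^\Phi(R)$. You instead note that $\nf(\syz^iM)$ is a descending chain of closed subsets of the noetherian space $\spec R$, hence stabilizes, and that the stable value is exactly $\ipd(M)$; so $\nf(\syz^NM)=\ipd(M)\subseteq\Phi$ for $N\gg0$. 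This is more elementary in that it avoids the external finiteness theorem and stays inside basic commutative algebra and point-set topology, at the cost of a slightly longer argument; it also yields as a byproduct that $\ipd(M)$ is always closed, which the paper does not record here. The remaining bookkeeping (using the triangle $P\to X\to M[n]\rightsquigarrow$, that projectives land in $\thick_{\db(R)}R$, and the inclusion ${[\C]}_k^{\mod R}\subseteq{\langle\C\rangle}_k^{\ds(R)}$ to pass to $\ds(R)$) matches the paper's; you simply spell out part~(3) in detail where the paper says only that it follows from~(2).
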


\begin{proof}
(1) It is obvious that $\M^\Phi(R)$ is contained in $\mod^\Phi(R)$, and hence so is its resolving closure.
To show the opposite inclusion, let $M$ be an object of $\mod^\Phi(R)$.
Then by definition $\nf(M)$ is contained in $\Phi$.
It is seen from Corollary \ref{231851} that there is a sequence $\xx=x_1,\dots,x_n$ of elements of $R$ with $\nf(M)=\v(\xx)$ such that $M$ belongs to $\res_{\mod R}\H(\xx,M)$.
Since $\H(\xx,M)$ is annihilated by $\xx$, we have
$$
\supp\H(\xx,M)\subseteq\v(\xx)=\nf(M)\subseteq\Phi,
$$
which shows $\H(\xx,M)\in\M^{\Phi}(R)$.
Consequently, $M$ is in $\res_{\mod R}(\M^\Phi(R))$.

(2) Clearly, $\db^\Phi(R)$ contains $R$ and $\M^\Phi(R)$, and the thick closure of $\{R\}\cup\M^\Phi(R)$.
Let $X$ be an object of $\db^\Phi(R)$.
Then there is an exact triangle
$$
P \to X \to M[n] \rightsquigarrow
$$
in $\db(R)$ such that $P$ is a perfect $R$-complex, $M$ is an $R$-module and $n$ is an integer.
We use the {\em large restricted flat dimension}
$$
\Rfd_RM=\sup_{\p\in\spec R}\{\depth R_\p-\depth_{R_\p}M_\p\}
$$
of $M$.
By \cite[Theorem 1.1]{AIL} this is finite.
Put $r=\Rfd_RM$.
Let $\p$ be a prime ideal in $\Phi^{\rm c}$.
Localizing the above exact triangle at $\p$, we see that the $R_\p$-module $M_\p$ has finite projective dimension.
Hence $\pd_{R_\p}M_\p=\depth R_\p-\depth_{R_\p}M_\p\le r$.
Setting $N=\syz^rM$, we observe that $N$ belongs to $\mod^\Phi(R)$, hence to $\res_{\mod R}(\M^\Phi(R))$ by (1).
Therefore $N$ is in $\thick_{\db(R)}(\{R\}\cup\M^\Phi(R))$, and so is $M$.
As $P\in\thick_{\db(R)}R$, the object $X$ belongs to $\thick_{\db(R)}(\{R\}\cup\M^\Phi(R))$ by the above exact triangle.

(3) This equality is obtained by using (2).
\end{proof}

One can describe the structure of $\M^\Phi(R)$ in more detail, which makes more visible representations of $\mod^\Phi(R)$, $\db^\Phi(R)$ and $\ds^\Phi(R)$.

\begin{cor}\label{040950}
Let $\Phi$ be a subset of $\spec R$.
Then $\M^\Phi(R)$ is the smallest subcategory of $\mod R$ containing $R/\p$ for all $\p\in\Phi^{\rm sp}$ and closed under extensions.
Here $\Phi^{\rm sp}$ denotes the largest specialization-closed subset of $\spec R$ contained in $\Phi$.
Hence
\begin{align*}
\mod^\Phi(R)&=\res_{\mod R}\{\,R/\p\mid\p\in\Phi^{\rm sp}\,\},\\
\db^\Phi(R)&=\thick_{\db(R)}\{\,R,R/\p\mid\p\in\Phi^{\rm sp}\,\},\\
\ds^\Phi(R)&=\thick_{\ds(R)}\{\,R/\p\mid\p\in\Phi^{\rm sp}\,\}.
\end{align*}
\end{cor}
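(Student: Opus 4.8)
The plan is to establish the structural description of $\M^\Phi(R)$ first, and then obtain the three displayed equalities by feeding that description into Theorem \ref{190906}. Write $\mathcal{E}$ for the smallest subcategory of $\mod R$ containing $R/\p$ for every $\p\in\Phi^{\rm sp}$ and closed under extensions; the task is to prove $\M^\Phi(R)=\mathcal{E}$.

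First I would record that, by definition, $\M^\Phi(R)$ is exactly the class of $M\in\mod R$ with $\supp M\subseteq\Phi$, and since the support of a finitely generated module is a closed, hence specialization-closed, subset of $\spec R$, this is the same as requiring $\supp M\subseteq\Phi^{\rm sp}$. For the inclusion $\mathcal{E}\subseteq\M^\Phi(R)$: if $\p\in\Phi^{\rm sp}$ then $\supp(R/\p)=\v(\p)\subseteq\Phi^{\rm sp}$ because $\Phi^{\rm sp}$ is specialization-closed, so $R/\p\in\M^\Phi(R)$; and as $\M^\Phi(R)$ is a Serre subcategory it is in particular closed under extensions, so it contains $\mathcal{E}$.

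For the reverse inclusion $\M^\Phi(R)\subseteq\mathcal{E}$ I would invoke the prime filtration theorem over a noetherian ring: any $M\in\mod R$ admits a finite filtration $0=M_0\subseteq M_1\subseteq\cdots\subseteq M_t=M$ with $M_i/M_{i-1}\cong R/\p_i$ for primes $\p_i$, all of which lie in $\supp M$. If $M\in\M^\Phi(R)$, then $\supp M\subseteq\Phi^{\rm sp}$ forces each $\p_i\in\Phi^{\rm sp}$, and $M$ is obtained from the modules $R/\p_i\in\mathcal{E}$ by finitely many successive extensions; hence $M\in\mathcal{E}$. This proves $\M^\Phi(R)=\mathcal{E}$.

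Finally, to pass to the three equalities I would observe that resolving subcategories of $\mod R$, as well as thick subcategories of $\db(R)$ and of $\ds(R)$, are all closed under extensions — for the triangulated cases because a short exact sequence $0\to X\to Y\to Z\to 0$ in $\mod R$ induces an exact triangle $X\to Y\to Z\rightsquigarrow$ in $\db(R)$, hence also in $\ds(R)$. Consequently the operators $\res_{\mod R}(-)$, $\thick_{\db(R)}(\{R\}\cup -)$ and $\thick_{\ds(R)}(-)$ take the same value on $\{R/\p\mid\p\in\Phi^{\rm sp}\}$ as on its extension closure $\mathcal{E}=\M^\Phi(R)$; combining this with the three equalities of Theorem \ref{190906} yields the displayed formulas. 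I do not expect a genuine obstacle here: all the real content is carried by the prime filtration theorem, and the only points needing a modicum of care are the reduction from $\supp M\subseteq\Phi$ to $\supp M\subseteq\Phi^{\rm sp}$ and the fact that thick subcategories inherit closure under short exact sequences from the ambient abelian category.
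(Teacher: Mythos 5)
Your proof is correct and follows essentially the same route as the paper's: a prime filtration argument gives $\M^\Phi(R)\subseteq\mathcal{E}$, the reverse inclusion comes from the fact that $\M^\Phi(R)$ is closed under extensions, and the three displayed equalities then follow from Theorem \ref{190906} together with closure under extensions of resolving and thick subcategories. The only minor difference is that the paper first establishes $\Phi^{\rm sp}=\supp(\M^\Phi(R))$ as an intermediate claim before appealing to it in the filtration step, whereas you reach the same conclusion more directly by observing that $\supp M$ is closed, hence specialization-closed, so $\supp M\subseteq\Phi$ already forces $\supp M\subseteq\Phi^{\rm sp}$.
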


\begin{proof}
The last assertion follows from Theorem \ref{190906}.

We claim that $\Phi^{\rm sp}=\supp(\M^\Phi(R))$ holds.
Indeed, it is evident that $\supp(\M^\Phi(R))$ is a specialization-closed subset of $\spec R$ contained in $\Phi$.
Let $\Psi$ be a specialization-closed subset of $\spec R$ contained in $\Phi$.
Then we have $\M^\Psi(R)\subseteq\M^\Phi(R)$, and hence $\Psi=\supp(\M^\Psi(R))\subseteq\supp(\M^\Phi(R))$.
Thus the claim holds.

Let $\X$ be the smallest subcategory of $\mod R$ containing $R/\p$ for all $\p\in\Phi^{\rm sp}$ and closed under extensions.
First, let $\p$ be a prime ideal in $\Phi^{\rm sp}$.
As $\Phi^{\rm sp}$ is specialization-closed, we have $\supp(R/\p)=\v(\p)\subseteq\Phi^{\rm sp}\subseteq\Phi$, whence $R/\p$ belongs to $\M^\Phi(R)$.
Since $\M^\Phi(R)$ is closed under extensions, $\M^\Phi(R)$ contains $\X$.
Next, let $M$ be a module in $\M^\Phi(R)$.
Take a filtration
$$
M=M_0\supsetneq M_1\supsetneq\cdots\supsetneq M_n=0
$$
of submodules of $M$ such that $M_{i-1}/M_i\cong R/\p_i$ with $\p_i\in\spec R$ for each $1\le i\le n$.
Then $\p_i$ is in $\supp M$, and so in $\supp(\M^\Phi(R))$.
By the claim, we have $\p_i\in\Phi^{\rm sp}$ for all $1\le i\le n$.
Decomposing the above filtration into short exact sequences, we see that $M$ is in $\X$.
Therefore $\X$ contains $\M^\Phi(R)$, and the proof is completed.
\end{proof}

Corollary \ref{040950} immediately gives the following, which includes part of Corollary \ref{011103}.
Note that the objects of $\mod^{\{\m\}}(R)$ are the $R$-modules that are locally free on the punctured spectrum of $R$.

\begin{cor}\label{191104}
\begin{enumerate}[\rm(1)]
\item
The equalities $\db(R)=\thick\{\,R,R/\p\mid\p\in\sing R\,\}$ and $\ds(R)=\thick\{\,R/\p\mid\p\in\sing R\,\}$ hold.
\item
Suppose that $R$ is a local ring with maximal ideal $\m$ and residue field $k$.
Then $\mod^{\{\m\}}(R)=\res(k)$, $\db^{\{\m\}}(R)=\thick(R\oplus k)$ and $\ds^{\{\m\}}(R)=\thick(k)$.
\end{enumerate}
\end{cor}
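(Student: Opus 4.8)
The plan is to derive both parts as special cases of Corollary \ref{040950}, choosing the subset $\Phi\subseteq\spec R$ so that $\Phi$ is itself specialization-closed (so that $\Phi^{\rm sp}=\Phi$) and so that the three subcategories $\mod^\Phi(R)$, $\db^\Phi(R)$, $\ds^\Phi(R)$ become the ones appearing in the statement.

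For part (1), I would take $\Phi=\sing R$. First I would check that $\sing R$ is specialization-closed: if $\p\subseteq\q$ are primes and $R_\q$ is regular, then its localization $R_\p$ is again regular, so $\q\notin\sing R$ forces $\p\notin\sing R$; hence $\Phi^{\rm sp}=\sing R$. Next I would verify that $\db^{\sing R}(R)=\db(R)$ and $\ds^{\sing R}(R)=\ds(R)$. Both are immediate from the definitions: for a prime $\p$ lying in the regular locus $\spec R\setminus\sing R$, the ring $R_\p$ is regular local, hence of finite global dimension, so every bounded complex of finitely generated $R_\p$-modules is perfect and $\ds(R_\p)=0$; therefore the localization conditions cutting out $\db^{\sing R}(R)$ inside $\db(R)$ and $\ds^{\sing R}(R)$ inside $\ds(R)$ hold vacuously. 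Applying Corollary \ref{040950} with $\Phi=\sing R$ then gives the two claimed equalities.

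For part (2), I would take $\Phi=\{\m\}$. Since $\v(\m)=\{\m\}$, this one-point set is specialization-closed, so $\Phi^{\rm sp}=\{\m\}$, and Corollary \ref{040950} yields $\mod^{\{\m\}}(R)=\res_{\mod R}\{R/\m\}$, $\db^{\{\m\}}(R)=\thick_{\db(R)}\{R,R/\m\}$ and $\ds^{\{\m\}}(R)=\thick_{\ds(R)}\{R/\m\}$. It then remains only to rewrite these in the stated form using $R/\m=k$ together with the elementary fact that the resolving (respectively, thick) closure of a finite family of objects coincides with that of the single object given by their direct sum, which turns $\thick\{R,R/\m\}$ into $\thick(R\oplus k)$. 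The hardest point — and it is a mild one — is the pair of identifications $\db^{\sing R}(R)=\db(R)$ and $\ds^{\sing R}(R)=\ds(R)$ used in part (1), which comes down to regular local rings having finite global dimension and trivial singularity category; everything else is bookkeeping. It is also worth noting why part (1) contains no statement about $\mod R$: the subcategory $\mod^{\sing R}(R)$ is in general proper, since a finitely generated module over a regular local ring need not be free.
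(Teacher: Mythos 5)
Your proposal is correct and follows exactly the route the paper takes: the paper states that Corollary \ref{040950} "immediately gives" Corollary \ref{191104}, and your argument simply spells out the (routine) choices $\Phi=\sing R$ and $\Phi=\{\m\}$ together with the verifications that these $\Phi$ are specialization-closed and that $\db^{\sing R}(R)=\db(R)$, $\ds^{\sing R}(R)=\ds(R)$. Nothing differs in substance from the paper's intended proof.
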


\begin{rem}
(1) Corollary \ref{191104}(1) can also be shown by using \cite[Theorem VI.8]{S}.\\
(2) Similar results to Corollary \ref{191104}(2) are obtained in \cite[Theorem 2.4]{stcm} and \cite{AI}.
\end{rem}

As a common consequence of the two assertions of Corollary \ref{191104}, one can recover \cite[Proposition A.2]{KMV}:

\begin{cor}\label{192102}
Let $R$ be an isolated singularity with residue field $k$.
Then $\db(R)=\thick(R\oplus k)$ and $\ds(R)=\thick(k)$.
\end{cor}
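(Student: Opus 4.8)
The plan is to deduce Corollary \ref{192102} directly from Corollary \ref{191104}(1) together with the definition of an isolated singularity, so that no genuinely new argument is needed. First I would record that an isolated singularity $R$ is, by definition, a local ring with maximal ideal $\m$ and residue field $k$ whose singular locus satisfies $\sing R\subseteq\{\m\}$. From this it follows that the set $\{\,R,\,R/\p\mid\p\in\sing R\,\}$ is contained in $\{R,k\}$: if $\sing R=\emptyset$ it equals $\{R\}$, and if $\sing R=\{\m\}$ it equals $\{R,k\}$ because $R/\m=k$. Likewise $\{\,R/\p\mid\p\in\sing R\,\}$ is either empty or equal to $\{k\}$.

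Next I would invoke Corollary \ref{191104}(1), which asserts $\db(R)=\thick\{\,R,R/\p\mid\p\in\sing R\,\}$ and $\ds(R)=\thick\{\,R/\p\mid\p\in\sing R\,\}$. Feeding in the observation above, the first equality gives $\db(R)=\thick\{\,R,R/\p\mid\p\in\sing R\,\}\subseteq\thick(R\oplus k)$, while the reverse inclusion $\thick(R\oplus k)\subseteq\db(R)$ is automatic; hence $\db(R)=\thick(R\oplus k)$. For the singularity category, the second equality gives $\ds(R)=\thick\{\,R/\p\mid\p\in\sing R\,\}\subseteq\thick(k)$, since that generating set is either $\emptyset$ (and the smallest thick subcategory is contained in every thick subcategory) or $\{k\}$; together with the trivial inclusion $\thick(k)\subseteq\ds(R)$ this yields $\ds(R)=\thick(k)$.

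I do not expect any real obstacle; the only point deserving a remark is the degenerate case $\sing R=\emptyset$, i.e.\ $R$ regular, in which $\ds(R)$ is the zero category and $\thick(k)$ is likewise zero because $k$ then has finite projective dimension and so is zero in $\ds(R)$. As just noted, this case is already absorbed into the inclusion $\thick\{\,R/\p\mid\p\in\sing R\,\}\subseteq\thick(k)$, so it need not be treated separately. As an alternative route, one could instead invoke Corollary \ref{191104}(2) after observing that, for an isolated singularity, $R_\p$ is regular for every non-maximal prime $\p$, whence $\mod^{\{\m\}}(R)=\mod R$, $\db^{\{\m\}}(R)=\db(R)$ and $\ds^{\{\m\}}(R)=\ds(R)$; but the first approach is shorter and I would present that one.
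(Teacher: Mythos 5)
Your proof is correct and takes essentially the same route as the paper, which deduces the corollary directly from Corollary \ref{191104}; you simply spell out the case analysis on whether $\sing R$ is empty or $\{\m\}$ (and note the alternative via Corollary \ref{191104}(2)), details that the paper leaves implicit.
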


Next, we make a closer investigation on the inner structure of subcategories.
In fact, we can refine the assertions as to $\mod^{\{\m\}}(R)$ and $\ds^{\{\m\}}(R)$ in Corollary \ref{191104}(2) in terms of balls in the abelian category $\mod R$ and the triangulated category $\ds(R)$.
Denote by $\fl(R)$ the subcategory of $\mod R$ consisting of modules of finite length.
The following theorem holds, which is the main part of Corollary \ref{011103}.

\begin{thm}\label{231924}
Let $R$ be a $d$-dimensional local ring with maximal ideal $\m$.
Then there are equalities $\mod^{\{\m\}}(R)=[\fl(R)]_{d+1}^{\mod R}$ and $\ds^{\{\m\}}(R)={\langle\fl(R)\rangle}_{d+1}^{\ds(R)}$.
\end{thm}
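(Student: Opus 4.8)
The plan is to prove both equalities by double inclusion, deriving the singularity-category statement from the module-category one. For the inclusion $[\fl(R)]_{d+1}^{\mod R}\subseteq\mod^{\{\m\}}(R)$ I would observe that $\mod^{\{\m\}}(R)$ is a resolving subcategory of $\mod R$ that contains $\fl(R)$ — a finite-length module is zero, hence free, at every non-maximal prime — and that a ball $[\C]_r$ of arbitrary radius is contained in the resolving closure $\res_{\mod R}\C$. The latter follows by an easy induction on $r$ from the definitions of $[\,\cdot\,]$, $\circ$ and $\bullet$, since a resolving subcategory is closed under finite direct sums, direct summands, syzygies and extensions. Likewise the monotonicity $\C\subseteq\C'\Rightarrow[\C]_r\subseteq[\C']_r$, which is needed below, is proved the same way.

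The substantive inclusion is $\mod^{\{\m\}}(R)\subseteq[\fl(R)]_{d+1}^{\mod R}$. Given $M\in\mod^{\{\m\}}(R)$ one has $\nf(M)\subseteq\{\m\}$. I would fix a system of parameters $\xx=x_1,\dots,x_d$ of $R$; since $(\xx)$ is $\m$-primary, $\v(\xx)=\{\m\}\supseteq\nf(M)$. Corollary \ref{231851} then yields an integer $k>0$ with $M\in[\H(\xx^k,M)]_{d+1}^{\mod R}$. Because $\H(\xx^k,M)=\bigoplus_i\H_i(\xx^k,M)$ is annihilated by the $\m$-primary ideal $(\xx^k)$, it is supported only at $\m$ and hence has finite length; applying monotonicity of balls gives $M\in[\fl(R)]_{d+1}^{\mod R}$, as desired.

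For the singularity category, the inclusion $\langle\fl(R)\rangle_{d+1}^{\ds(R)}\subseteq\ds^{\{\m\}}(R)$ is handled exactly as above, using that $\ds^{\{\m\}}(R)$ is thick and contains $\fl(R)$ and that any ball $\langle\C\rangle_r$ lies in $\thick_{\ds(R)}\C$. For the opposite inclusion, let $X\in\ds^{\{\m\}}(R)$. By the remark recalling \cite[Lemma 2.4]{sing} there is a module $M$ such that $X$ is isomorphic to a shift of $M$ in $\ds(R)$. Imitating the proof of Theorem \ref{190906}(2), I would set $r=\Rfd_RM$, which is finite by \cite[Theorem 1.1]{AIL}, and $N=\syz^rM$; localizing at any non-maximal prime $\p$ shows $\pd_{R_\p}M_\p\le r$, so $N_\p$ is $R_\p$-free and $N\in\mod^{\{\m\}}(R)$. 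By the module-category equality $N\in[\fl(R)]_{d+1}^{\mod R}$, hence $N\in\langle\fl(R)\rangle_{d+1}^{\ds(R)}$ by \cite[Lemma 2.4(2)]{sing}. Since $X$ is a shift of $N$ in $\ds(R)$ and balls in a triangulated category are closed under shifts, $X\in\langle\fl(R)\rangle_{d+1}^{\ds(R)}$.

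The only genuine content lies in the reverse inclusion in the abelian case, and the decisive point there — the reason the radius comes out exactly $d+1$ — is that a system of parameters has length exactly $\dim R=d$, so that Corollary \ref{231851} produces a ball of radius $d+1$ and nothing larger. The remaining ingredients (a ball sitting inside the corresponding closure, monotonicity of balls, finiteness of length of the Koszul homology of an $\m$-primary ideal, and the passage from complexes to modules via restricted flat dimension) are routine or already available in the paper, so I anticipate only bookkeeping rather than a real obstacle.
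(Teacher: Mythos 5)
Your proof is correct and follows essentially the same route as the paper's: the non-trivial inclusion $\mod^{\{\m\}}(R)\subseteq[\fl(R)]_{d+1}$ is obtained by applying Corollary \ref{231851} to a system of parameters and observing that the Koszul homology has finite length, while the singularity-category equality is deduced from the module-category one by passing to a high enough syzygy. The only cosmetic difference is in that last step, where you pass to $\syz^{\Rfd_RM}M$, whereas the paper passes directly to $\syz^dM$ and uses the Auslander--Buchsbaum formula together with the bound $\depth R_\p\le\dim R_\p<d$ for $\p\ne\m$; both variants are fine.
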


\begin{proof}
(1) Let us show the first equality.
It clearly holds when $d=0$, so we assume $d>0$.
Let $M$ be an $R$-module in $\mod^{\{\m\}}(R)$.
Take any system of parameters $\xx=x_1,\dots,x_d$ of $R$.
As $M$ is in $\mod^{\{\m\}}(R)$, we have $\nf(M)\subseteq\{\m\}=\v(\xx)$.
Corollary \ref{231851} implies that $M$ belongs to $[\H(\xx^k,M)]_{d+1}$ for some $k>0$.
Since the $R$-module $\H(\xx^k,M)$ is annihilated by the $\m$-primary ideal $(\xx^k)$, it has finite length.
Thus we obtain $M\in[\fl(R)]_{d+1}$, and the first equality follows.

(2) We prove the second equality.
Let $X$ be an $R$-complex in $\ds^{\{\m\}}(R)$.
Note that $X\cong\syz^d M[n]$ in $\ds(R)$ for some $R$-module $M$ and some integer $n$.
By the Auslander--Buchsbaum formula, we see that $\syz^dM$ belongs to $\mod^{\{\m\}}(R)=[\fl(R)]_{d+1}$.
Now the second equality follows from the first one.
\end{proof}

Here is an immediate consequence of Theorem \ref{231924}.

\begin{cor}\label{020945}
Let $R$ be a $d$-dimensional isolated singularity.
Then $\ds(R)={\langle\fl(R)\rangle}_{d+1}$.
\end{cor}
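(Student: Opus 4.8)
The plan is to deduce this at once from Theorem \ref{231924} by showing that, when $R$ is an isolated singularity, the subcategory $\ds^{\{\m\}}(R)$ is in fact the whole of $\ds(R)$. So the real content is the identification $\ds(R)=\ds^{\{\m\}}(R)$, after which the ball description is inherited verbatim from Theorem \ref{231924}.

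First I would recall that, by definition, an isolated singularity is a local ring $R$ with maximal ideal $\m$ such that $\sing R\subseteq\{\m\}$; equivalently, $R_\p$ is a regular local ring for every prime $\p\neq\m$. Since a regular local ring has finite global dimension, every homologically bounded complex of finitely generated $R_\p$-modules has finite projective dimension, hence is isomorphic in $\db(R_\p)$ to a perfect $R_\p$-complex. Now let $X$ be an arbitrary object of $\ds(R)$, represented by a complex in $\db(R)$. For each $\p\in\spec R\setminus\{\m\}$ the localization $X_\p$ lies in $\db(R_\p)$ and, by the preceding remark, is there isomorphic to a perfect complex, i.e. $X_\p\cong 0$ in $\ds(R_\p)$. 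By the very definition of $\ds^{\{\m\}}(R)$ this means $X\in\ds^{\{\m\}}(R)$. Since the inclusion $\ds^{\{\m\}}(R)\subseteq\ds(R)$ is trivial, we conclude $\ds(R)=\ds^{\{\m\}}(R)$.

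Finally, $R$ is a $d$-dimensional local ring, so Theorem \ref{231924} gives $\ds^{\{\m\}}(R)={\langle\fl(R)\rangle}_{d+1}^{\ds(R)}$, and combining this with the equality just established yields $\ds(R)={\langle\fl(R)\rangle}_{d+1}$, as claimed. I do not expect any serious obstacle; the one step that merits a moment's care is the observation that over the regular local ring $R_\p$ a bounded complex of finitely generated modules is automatically perfect, which is precisely what converts the hypothesis ``isolated singularity'' into the identity $\ds(R)=\ds^{\{\m\}}(R)$.
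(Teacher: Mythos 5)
Your argument is correct and is exactly what the paper intends by calling the corollary an ``immediate consequence'' of Theorem \ref{231924}: the hypothesis $\sing R\subseteq\{\m\}$ forces $R_\p$ to be regular for $\p\neq\m$, hence $\ds(R_\p)=0$, so $\ds(R)=\ds^{\{\m\}}(R)$, and then Theorem \ref{231924} applies verbatim.
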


\begin{rem}
(1) Rewording the second equality in Theorem \ref{231924} by the terminology introduced in \cite{ddcm}, one has the following inequality:
$$
\fl(R)\operatorname{\text{-tri.dim}}\ds^{\{\m\}}(R)\le\dim R.
$$
(2) The result \cite[Theorem A]{res} constructs {\em some} object in $\mod^{\{\m\}}(R)$ from {\em every} object in $\mod R$ and counts the number of necessary operations (containing syzygies).
In contrast to this, Theorem \ref{231924} constructs {\em every} object in $\mod^{\{\m\}}(R)$ from {\em some} object in $\fl(R)$ and counts the number of necessary operations.\\
(3) Similar equalities to the first equality in Theorem \ref{231924} are given for $\mod R$ in \cite[Theorem VI.8]{S} and \cite[Theorem 2]{BC}, but these are different from ours in respect of how to count operations.
The biggest difference is that neither of those two results counts the number of necessary extensions.\\
(4) In the case where $R$ is Cohen--Macaulay, Corollary \ref{020945} also follows from \cite[(4.5.1)]{ddcm}, because every maximal Cohen--Macaulay $R$-module is a direct summand of the $d$-th syzygy of some module of finite length by \cite[Proposition 2.2]{stcm}.
\end{rem}

Finally, we are interested in classifying resolving and thick subcategories by using $\mod^\Phi(R)$, $\db^\Phi(R)$ and $\ds^\Phi(R)$.
For this purpose, we prepare a lemma:

\begin{lem}\label{301519}
\begin{enumerate}[\rm(1)]
\item
The assignments $\X\mapsto\supp\X$ and $\Phi\mapsto \M^\Phi(R)$ make a one-to-one correspondence between the Serre subcategories of $\mod R$ and the specialization-closed subsets of $\spec R$.
\item
Let $\Phi$ be a specialization-closed subset of $\spec R$.
Then $\nf(\mod^\Phi(R))=\Phi\cap\s(R)$ and $\ipd(\db^\Phi(R))=\ssupp(\ds^\Phi(R))=\Phi\cap\sing R$.
\end{enumerate}
\end{lem}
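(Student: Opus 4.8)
The plan is to handle the two parts separately: part (1) is the classical correspondence between Serre subcategories of $\mod R$ and specialization-closed subsets of $\spec R$, and part (2) will follow by exhibiting, for each prime in the relevant intersection, the cyclic module $R/\p$ as a witness. For part (1) I would first note both assignments are well defined — $\supp\X=\bigcup_{M\in\X}\v(\ann M)$ is a union of closed subsets, hence specialization-closed, and $\M^\Phi(R)=\{M\in\mod R\mid\supp M\subseteq\Phi\}$ is Serre (already recorded after its definition) — and then verify the two round-trip identities. The identity $\supp(\M^\Phi(R))=\Phi$ is clear in one direction; for the other, given $\p\in\Phi$ one has $\supp(R/\p)=\v(\p)\subseteq\Phi$ since $\Phi$ is specialization-closed, so $R/\p\in\M^\Phi(R)$ and $\p\in\supp(\M^\Phi(R))$. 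The identity $\M^{\supp\X}(R)=\X$ is the heart of the matter: the inclusion $\X\subseteq\M^{\supp\X}(R)$ is immediate, and for the reverse I would take $M$ with $\supp M\subseteq\supp\X$, pick a prime filtration $0=M_0\subsetneq M_1\subsetneq\cdots\subsetneq M_n=M$ with $M_i/M_{i-1}\cong R/\p_i$, and show each $R/\p_i\in\X$: since $\p_i\in\supp M\subseteq\supp\X$ there is $N\in\X$ with $N_{\p_i}\ne0$, and localizing a prime filtration of $N$ at $\p_i$ produces a prime $\q\subseteq\p_i$ with $R/\q$ a subquotient of $N$, hence $R/\q\in\X$, so $R/\p_i$ (a quotient of $R/\q$) lies in $\X$; then $M\in\X$ by closure under extensions.

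For part (2), the inclusions $\nf(\mod^\Phi(R))\subseteq\Phi\cap\s(R)$ and $\ipd(\db^\Phi(R)),\ssupp(\ds^\Phi(R))\subseteq\Phi\cap\sing R$ come directly from the defining conditions of these subcategories, which force the loci into $\Phi$, together with two elementary facts: a module over a field is free, and over a regular local ring every bounded complex is perfect — these force the loci into $\s(R)$ and $\sing R$ respectively. For the reverse inclusions I would again use $R/\p$: since $\Phi$ is specialization-closed and $\p\in\Phi$ we have $\supp(R/\p)=\v(\p)\subseteq\Phi$, so $R/\p$ belongs to $\M^\Phi(R)$ and hence to each of $\mod^\Phi(R)$, $\db^\Phi(R)$, $\ds^\Phi(R)$; moreover $(R/\p)_\p=\kappa(\p)$ is a nonfree $R_\p$-module exactly when $R_\p$ is not a field, and is of infinite projective dimension over $R_\p$ exactly when $R_\p$ is not regular (Serre's criterion). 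Hence $\p\in\nf(R/\p)$ whenever $\p\in\Phi\cap\s(R)$, and $\p\in\ipd(R/\p)$ whenever $\p\in\Phi\cap\sing R$, which yields the three desired equalities.

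The only step I expect to demand genuine care is the inclusion $\M^{\supp\X}(R)\subseteq\X$ in part (1) — the statement that a Serre subcategory contains every module supported on its support — for which the prime-filtration argument above, together with closure under subobjects, quotients and extensions, is the right tool. Everything else amounts to unwinding the definitions of $\mod^\Phi(R)$, $\db^\Phi(R)$, $\ds^\Phi(R)$ and invoking standard properties of fields and regular local rings.
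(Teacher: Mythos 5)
Your proposal is correct and follows the same route as the paper. For part (2) you argue exactly as the paper does: $R/\p$ is the witness object, with $\p\in\nf(R/\p)$ precisely when $R_\p$ is not a field and $\p\in\ipd(R/\p)$ precisely when $R_\p$ is not regular, while the forward inclusions are immediate from the definitions of $\mod^\Phi(R)$, $\db^\Phi(R)$, $\ds^\Phi(R)$ and the facts that modules over a field are free and bounded complexes over a regular local ring are perfect. For part (1) the paper simply cites Gabriel's classification theorem; your prime-filtration argument (using that a Serre subcategory containing some $N$ with $N_\p\ne 0$ must contain $R/\q$ for some $\q\subseteq\p$, hence $R/\p$ as a quotient) is exactly the standard proof of that theorem, so it is the same idea spelled out rather than cited.
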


\begin{proof}
(1) This is nothing but Gabriel's classification theorem of Serre subcategories \cite{G}.

(2) Let $\p\in\Phi$.
Then $\ipd(R/\p)\subseteq\nf(R/\p)\subseteq\v(\p)\subseteq\Phi$.
Hence $R/\p$ belongs to $\mod^\Phi(R)$, $\db^\Phi(R)$ and $\ds^\Phi(R)$.
If $\p$ is in $\s(R)$ (respectively, $\sing R$), then $\p$ is in $\nf(R/\p)$ (respectively, $\ipd(R/\p)$).
The assertion now follows.
\end{proof}

We can obtain the following theorem, which includes Corollary \ref{301200}.

\begin{thm}\label{301626}
\begin{enumerate}[\rm(1)]
\item
The assignment $\Phi\mapsto\mod^\Phi(R)$ makes a bijection from the set of specialization-closed subsets of $\spec R$ contained in $\s(R)$ to the set of resolving closures $\res_{\mod R}\X$, where $\X$ runs through the Serre subcategories of $\mod R$.
\item
The assignment $\Phi\mapsto\db^\Phi(R)$ makes a bijection from the set of specialization-closed subsets of $\spec R$ contained in $\sing R$ to the set of thick closures $\thick_{\db(R)}(\{R\}\cup\X)$, where $\X$ runs through the Serre subcategories of $\mod R$.
\item
The assignment $\Phi\mapsto\ds^\Phi(R)$ makes a bijection from the set of specialization-closed subsets of $\spec R$ contained in $\sing R$ to the set of thick closures $\thick_{\ds(R)}\X$, where $\X$ runs through the Serre subcategories of $\mod R$.
\end{enumerate}
\end{thm}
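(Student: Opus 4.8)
The plan is to deduce all three bijections from Theorem~\ref{190906} and Lemma~\ref{301519}; the argument is essentially bookkeeping once two elementary observations are in place. I would treat part (1) in detail and then obtain (2) and (3) by the same scheme, replacing $\s(R)$ by $\sing R$, replacing $\nf$ by $\ipd$ (resp.\ $\ssupp$), and invoking Theorem~\ref{190906}(2) (resp.\ (3)) in place of Theorem~\ref{190906}(1).

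First I would record the following. The subset $\s(R)$ of $\spec R$ is specialization-closed: if $R_\p$ is not a field and $\p\subseteq\q$, then $R_\p$ is a localization of $R_\q$, so $R_\q$ is not a field either; likewise $\sing R$ is specialization-closed, since a localization of a regular local ring is regular. Next, for an arbitrary $\Psi\subseteq\spec R$ one has $\mod^\Psi(R)=\mod^{\Psi\cap\s(R)}(R)$, because a prime $\p\notin\s(R)$ imposes no condition for membership in $\mod^\Psi(R)$: every module over the field $R_\p$ is free. By the same token $\db^\Psi(R)=\db^{\Psi\cap\sing R}(R)$ and $\ds^\Psi(R)=\ds^{\Psi\cap\sing R}(R)$, using that every object of $\db(R_\p)$ is perfect when $R_\p$ is regular. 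In particular, intersecting a specialization-closed set with $\s(R)$ (resp.\ $\sing R$) again yields a specialization-closed set, now contained in $\s(R)$ (resp.\ $\sing R$), without changing the associated subcategory.

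With these in hand, part (1) follows. The assignment is well defined: for a specialization-closed $\Phi\subseteq\s(R)$, Lemma~\ref{301519}(1) shows $\M^\Phi(R)$ is a Serre subcategory, and Theorem~\ref{190906}(1) gives $\mod^\Phi(R)=\res_{\mod R}(\M^\Phi(R))$, a resolving closure of a Serre subcategory. It is surjective: given a Serre subcategory $\X$, set $\Psi=\supp\X$, which is specialization-closed; then $\X=\M^\Psi(R)$ by Lemma~\ref{301519}(1), so $\res_{\mod R}(\X)=\mod^\Psi(R)=\mod^{\Psi\cap\s(R)}(R)$ by Theorem~\ref{190906}(1) and the previous paragraph, and $\Psi\cap\s(R)$ lies in the source set. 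It is injective: by Lemma~\ref{301519}(2), if $\Phi\subseteq\s(R)$ is specialization-closed then $\nf(\mod^\Phi(R))=\Phi\cap\s(R)=\Phi$, so $\Phi$ is recovered from $\mod^\Phi(R)$. Parts (2) and (3) are obtained by the identical argument, using the identities $\db^\Phi(R)=\thick_{\db(R)}(\{R\}\cup\M^\Phi(R))$ and $\ds^\Phi(R)=\thick_{\ds(R)}(\M^\Phi(R))$ from Theorem~\ref{190906}, together with $\ipd(\db^\Phi(R))=\Phi$ and $\ssupp(\ds^\Phi(R))=\Phi$ from Lemma~\ref{301519}(2) for specialization-closed $\Phi\subseteq\sing R$.

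The main — and essentially only — subtlety is the mismatch between the index set in Lemma~\ref{301519}(1), namely all specialization-closed subsets of $\spec R$, and the index sets appearing here, which are confined to $\s(R)$ or $\sing R$; handling it is exactly the point of the second paragraph. The rest is a direct splicing together of Theorem~\ref{190906} and Lemma~\ref{301519}.
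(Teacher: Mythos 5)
Your proof is correct and follows essentially the same route as the paper: well-definedness from Theorem~\ref{190906}, injectivity from Lemma~\ref{301519}(2) via recovering $\Phi$ as $\nf(\mod^\Phi(R))$ (resp.\ $\ipd$, $\ssupp$), and surjectivity by intersecting the specialization-closed set from Lemma~\ref{301519}(1) with $\s(R)$ or $\sing R$ without changing the associated subcategory. You have merely spelled out a couple of elementary facts (that $\s(R)$ and $\sing R$ are specialization-closed and that $\mod^\Psi(R)=\mod^{\Psi\cap\s(R)}(R)$, etc.) which the paper leaves as ``easily seen.''
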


\begin{proof}
In view of Theorem \ref{190906}, the three assignments make well-defined maps, and they are injective by Lemma \ref{301519}(2).
Thus it only remains to show that they are surjective.

(1) Let $\X$ be a Serre subcategory of $\mod R$.
According to Lemma \ref{301519}(1), we have $\X=\M^Z(R)$ for some specialization-closed subset $Z$ of $\spec R$.
Putting $\Phi=Z\cap\s(R)$, we easily see that $\Phi$ is a specialization-closed subset of $\spec R$ which is contained in $\s(R)$ and satisfies $\mod^Z(R)=\mod^\Phi(R)$.
Theorem \ref{190906} implies $\res_{\mod R}\X=\mod^\Phi(R)$.

(2)(3) We use the proof of (1).
Set $\Psi=Z\cap\sing R$.
Then $\Psi$ is a specialization-closed subset of $\spec R$ contained in $\sing R$ such that the equalities $\db^Z(R)=\db^\Psi(R)$ and $\ds^Z(R)=\ds^\Psi(R)$ hold.
Hence the surjectivity of the map follows from Theorem \ref{190906}.
\end{proof}

The result below is now ready to be given, which includes Corollary \ref{301201}.
Here, (1) and the equivalence of (b)--(d) in (2) are proved in \cite[Theorem 1.1 and Proposition 4.6]{crs} under the assumption that $R$ is a  Cohen--Macaulay local ring.
Our results show that this assumption is superfluous.

\begin{cor}\label{301355}
\begin{enumerate}[\rm(1)]
\item
The assignments $\Phi\mapsto\mod^\Phi(R)$ and $\X\mapsto\nf(\X)$ make mutually inverse bijections between
\begin{itemize}
\item
the specialization-closed subsets of $\spec R$ contained in $\s(R)$, and
\item
the resolving subcategories of $\mod R$ closed under tensor products and transposes.
\end{itemize}
\item
Let $\X$ be a resolving subcategory of $\mod R$.
Then the following are equivalent:
\begin{enumerate}[\rm(a)]
\item
$\X$ is the resolving closure of a Serre subcategory of $\mod R$;
\item
$\X$ is closed under tensor products and transposes;
\item
$R/\p$ belongs to $\X$ for all $\p\in\nf(\X)$;
\item
For all $\p\in\nf(\X)$ there exists $M\in\X$ such that $\kappa(\p)$ is a direct summand of $M_\p$.
\end{enumerate}
\end{enumerate}
\end{cor}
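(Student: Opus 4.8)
The plan is to derive the final one-to-one correspondence from part~(2) of the corollary, and to prove part~(2) through the cyclic chain $\mathrm{(a)}\Rightarrow\mathrm{(b)}\Rightarrow\mathrm{(d)}\Rightarrow\mathrm{(c)}\Rightarrow\mathrm{(a)}$. Two of the links are short. For $\mathrm{(a)}\Rightarrow\mathrm{(b)}$: if $\X=\res_{\mod R}\s$ with $\s$ a Serre subcategory, then Theorem~\ref{190906} applied with $\Phi=\supp\s$ identifies $\X$ with $\mod^\Phi(R)$; this is resolving by definition, and it is closed under tensor products and transposes because $\nf(M\otimes_R N)\subseteq\nf(M)\cup\nf(N)$ and $\nf(\tr M)=\nf(M)$, so the defining locus condition of $\mod^\Phi(R)$ passes to $M\otimes_R N$ and to $\tr M$. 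For $\mathrm{(c)}\Rightarrow\mathrm{(a)}$: put $\Phi=\nf(\X)$; since each $\nf(M)$ is Zariski closed, $\Phi$ is specialization-closed, and it lies in $\s(R)$ because a field admits no nonfree modules, so Corollary~\ref{040950} gives $\mod^\Phi(R)=\res_{\mod R}\{\,R/\p\mid\p\in\Phi\,\}\subseteq\X$, while $\X\subseteq\mod^\Phi(R)$ is immediate from the definition of $\nf(\X)$; hence $\X=\mod^\Phi(R)=\res_{\mod R}(\M^\Phi(R))$ by Theorem~\ref{190906}, a resolving closure of a Serre subcategory.

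For $\mathrm{(b)}\Rightarrow\mathrm{(d)}$: fix $\p\in\nf(\X)$ and $M\in\X$ with $M_\p$ nonfree. Taking syzygies, extensions, direct summands, tensor products and transposes all commute with localization at $\p$, so the subcategory of $\mod R_\p$ consisting of the localizations $L_\p$ of the objects $L$ of $\X$ is a resolving subcategory of $\mod R_\p$, closed under tensor products and transposes, that contains the nonfree $R_\p$-module $M_\p$. Over the local ring $R_\p$ one checks that such a subcategory must contain the residue field $\kappa(\p)$; transporting a module realizing this back along the localization functor yields $N\in\X$ with $\kappa(\p)$ a direct summand of $N_\p$. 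The local assertion I would prove directly: from a minimal free presentation of a nonfree module over a local ring one manufactures, inside its tensor--transpose--resolving closure, a module with the residue field split off. This is the local content of the argument of \cite[Proposition~4.6]{crs}, and in this direction it is needed only locally and without the Cohen--Macaulay hypothesis.

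The implication $\mathrm{(d)}\Rightarrow\mathrm{(c)}$ is the decisive one, and it is exactly here that the reconstruction results of Section~\ref{Rap} take over the role played in \cite{crs} by the Cohen--Macaulay and local hypotheses. Given $\p\in\nf(\X)$ and $M\in\X$ with $\kappa(\p)$ a direct summand of $M_\p$, I must deduce $R/\p\in\X$. Choose a sequence $\xx=x_1,\dots,x_n$ of elements of $R$ with $\v(\xx)=\v(\p)$, and, by Lemma~\ref{181842}, replace it by a power so that the hypotheses of Theorem~\ref{181627} hold for $M$; then Corollary~\ref{241142}(2) builds $M$ out of the Koszul homologies $\H_0(\xx,M),\dots,\H_n(\xx,M)$ by $n$ syzygies, $n$ extensions and one direct summand, each $\H_i(\xx,M)$ being supported in $\v(\xx)=\v(\p)$ and having $\kappa(\p)$ split off at $\p$ (as $\xx$ maps to $0$ in $\kappa(\p)$). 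The plan is to combine this with $\X$ being resolving and with Corollary~\ref{040950}, which resolves any module supported in $\v(\p)$ into extensions of the modules $R/\q$ with $\q\supseteq\p$, so that, by an induction on the primes of $\nf(\X)$, the task reduces to pinning down $R/\p$ itself inside $\X$. Converting the purely local datum ``$\kappa(\p)\mid M_\p$ for some $M\in\X$'' into the global membership $R/\p\in\X$ is the step I expect to be the main obstacle; it is precisely the content of the improvement over \cite{crs}.

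Finally, part~(1) is then formal. The assignment $\Phi\mapsto\mod^\Phi(R)$ sends specialization-closed subsets of $\spec R$ contained in $\s(R)$ to resolving subcategories of $\mod R$ closed under tensor products and transposes (as in $\mathrm{(a)}\Rightarrow\mathrm{(b)}$), and $\X\mapsto\nf(\X)$ sends such a subcategory back to a specialization-closed subset of $\s(R)$ (as noted above); these are mutually inverse, since $\nf(\mod^\Phi(R))=\Phi\cap\s(R)=\Phi$ by Lemma~\ref{301519}(2), while for $\X$ resolving and closed under tensor products and transposes the implication $\mathrm{(b)}\Rightarrow\mathrm{(c)}$ gives $R/\p\in\X$ for every $\p\in\nf(\X)$, so the argument of $\mathrm{(c)}\Rightarrow\mathrm{(a)}$ run with $\Phi=\nf(\X)$ yields $\mod^{\nf(\X)}(R)=\X$. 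This is the asserted one-to-one correspondence, refining Theorem~\ref{301626}(1).
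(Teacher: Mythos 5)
Your proposal attempts a direct proof of part~(2) via the cycle $\mathrm{(a)}\Rightarrow\mathrm{(b)}\Rightarrow\mathrm{(d)}\Rightarrow\mathrm{(c)}\Rightarrow\mathrm{(a)}$, whereas the paper proves the corollary by a careful citation chain: Corollary~\ref{191104}(2) (the new reconstruction input) combined with \cite[Lemma~3.2]{crspd} shows that \cite[Lemma~2.5]{crs} holds without the Cohen--Macaulay hypothesis, which then propagates through \cite[Propositions~3.1, 3.3, Theorem~3.3, Lemmas~4.4--4.5, Proposition~4.6, Theorem~4.7]{crs}/\cite{crspd} and finally combines with Theorem~\ref{301626}. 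The two implications you flag as short are fine, but the two you flag as hard contain genuine gaps.

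For $\mathrm{(b)}\Rightarrow\mathrm{(d)}$: the assertion that ``the subcategory of $\mod R_\p$ consisting of the localizations $L_\p$ of the objects $L$ of $\X$ is a resolving subcategory of $\mod R_\p$'' is false as stated. Localization commutes with the operations, but that does not make the image closed under direct summands in $\mod R_\p$: if $N$ is a direct summand of $L_\p$ obtained from an $R_\p$-linear idempotent that does not lift to $R$, there is no reason for $N$ to be of the form $L'_\p$ with $L'\in\X$. If instead you pass to the resolving closure of $\{L_\p\mid L\in\X\}$ in $\mod R_\p$, you then need a nontrivial ``descent'' statement to find $N\in\X$ with $\kappa(\p)\mid N_\p$; this is precisely the content of \cite[Lemma~3.2]{crspd}, which the paper invokes and you elide.

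For $\mathrm{(d)}\Rightarrow\mathrm{(c)}$ the gap is worse and partly self-acknowledged. You choose $\xx$ with $\v(\xx)=\v(\p)$ and then invoke Lemma~\ref{181842} to raise $\xx$ to a power satisfying the hypotheses of Theorem~\ref{181627}; but Lemma~\ref{181842} requires $\nf(M)\subseteq\v(\xx)$, and here $\nf(M)$ can be much larger than $\v(\p)$ (you only know $\p\in\nf(M)$, not $\nf(M)\subseteq\v(\p)$). So the reconstruction machinery cannot be applied to $M$ with this $\xx$, and the Koszul homologies you would get are not supported on $\v(\p)$ even if it could. The remaining ``plan'' (induction over primes of $\nf(\X)$, reducing to pinning down $R/\p$) is a sketch of an idea, not an argument, and you explicitly say you ``expect'' this to be the main obstacle. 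That obstacle is exactly what the paper handles not by a fresh construction but by verifying that the hypotheses in \cite[Proposition~3.1, Theorem~3.3]{crs} can be removed once Corollary~\ref{191104}(2) and \cite[Proposition~3.3]{crspd} are in hand. As written, your proof does not establish the statement.
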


\begin{proof}
Recall that we have proved in Corollary \ref{191104}(2) that if $R$ is a local ring with maximal ideal $\m$ and residue field $k$, then the equality $\mod^{\{\m\}}(R)=\res_{\mod R}(k)$ holds.
Hence, in view of \cite[Lemma 3.2]{crspd}, we see that all the ten assertions in \cite[Lemma 2.5]{crs} hold without the assumption that $R$ is Cohen--Macaulay.
Therefore, it is observed from \cite[Proposition 3.3]{crspd} and the proof of \cite[Proposition 3.1]{crs} that one can remove from \cite[Proposition 3.1]{crs} the two assumptions that $R$ is local and that $R$ is Cohen--Macaulay.
Thus, the proof of \cite[Theorem 3.3]{crs} actually proves that the statement \cite[Theorem 3.3]{crs} holds without the assumption that $R$ is a Cohen--Macaulay local ring.
Since \cite[Lemma 4.5]{crs} (repsectively, \cite[Lemma 4.4]{crs}) is still valid for an arbitrary commutative noetherian ring (respectively, local ring) $R$, so are \cite[Proposition 4.6 and Theorem 4.7]{crs}.
Now our Theorem \ref{301626} completes the proof of the corollary.
\end{proof}

%%%%%%%%%%%%%%%%%%%%%%%%%%%%%%%%%%%%%%%%%%%%%%%%%%%%%%%%%%%%%%%%%%
\begin{ac}
The author is grateful to Osamu Iyama for giving him helpful suggestions and letting him know about \cite{AI}.
The author thanks Takuma Aihara and Xiao-Wu Chen for useful comments.
The author also thanks the referee for reading the paper carefully.
\end{ac}
%%%%%%%%%%%%%%%%%%%%%%%%%%%%%%%%%%%%%%%%%%%%%%%%%%%%%%%%

%%%%%%%%%%%%%%%%%%%%%%%%%%%%%%%%%%%%%%%%%%%%%%%%%%%%%%%%

\begin{thebibliography}{99}
\bibitem{AI}
H. Abe; O. Iyama, in preparation.
\bibitem{ddcm}
T. Aihara; T. Araya; O. Iyama; R. Takahashi; M. Yoshiwaki, Dimensions of triangulated categories with respect to subcategories, {\em J. Algebra} {\bf 399} (2014), 205--219.
\bibitem{ddc}
T. Aihara; R. Takahashi, Generators and dimensions of derived categories, Preprint (2011), \texttt{arXiv:1106.0205}.
\bibitem{ABIM}
L. L. Avramov; R.-O. Buchweitz; S. B. Iyengar; C. Miller, Homology of perfect complexes, {\em Adv. Math.} {\bf 223} (2010), no. 5, 1731--1781.
\bibitem{AI2}
L. L. Avramov; S. B. Iyengar, Dimension of stable derived categories of local rings, in preparation.
\bibitem{AIL}
L. L. Avramov; S. B. Iyengar; J. Lipman, Reflexivity and rigidity for complexes I. Commutative rings, {\em Algebra Number Theory} {\bf 4} (2010), no. 1, 47--86.
\bibitem{BFK}
M. Ballard; D. Favero; L. Katzarkov, Orlov spectra: bounds and gaps, {\em Invent. Math.} {\bf 189} (2012), no. 2, 359--430.
\bibitem{Ba}
P. Balmer, Presheaves of triangulated categories and reconstruction of schemes, {\em Math. Ann.} {\bf 324} (2002), no. 3, 557--580.
\bibitem{Ba2}
P. Balmer, The spectrum of prime ideals in tensor triangulated categories, {\em J. Reine Angew. Math.} {\bf 588} (2005), 149--168.
\bibitem{BIK}
D. J. Benson; S. B. Iyengar; H. Krause, Stratifying modular representations of finite groups, {\em Ann. of Math. (2)} {\bf 174} (2011), no. 3, 1643--1684.
\bibitem{BIKO}
P. A. Bergh; S. B. Iyengar; H. Krause; S. Oppermann, Dimensions of triangulated categories via Koszul objects, {\it Math. Z.} {\bf 265} (2010), no. 4, 849--864.
\bibitem{BV}
A. Bondal; M. van den Bergh, Generators and representability of functors in commutative and noncommutative geometry, {\em Mosc. Math. J.} {\bf 3} (2003), no. 1, 1--36, 258.
\bibitem{B}
R.-O. Buchweitz, Maximal Cohen-Macaulay modules and Tate-cohomology over Gorenstein rings, Preprint (1986), \texttt{http://hdl.handle.net/1807/16682}.
\bibitem{BC}
J. Burke; L. W. Christensen; R. Takahashi, Building modules from the singular locus, {\em Math. Scand.} (to appear), \texttt{arXiv:1210.0055}.
\bibitem{Cd}
J. D. Christensen, Ideals in triangulated categories: phantoms, ghosts and skeleta, {\em Adv. Math.} {\bf 136} (1998), no. 2, 284--339.
\bibitem{C}
L. W. Christensen, Gorenstein dimensions, Lecture Notes in Mathematics, 1747, {\em Springer-Verlag, Berlin}, 2000.
\bibitem{crspd}
H. Dao; R. Takahashi, Classification of resolving subcategories and grade consistent functions, {\em Int. Math. Res. Not. IMRN} (to appear), \texttt{arXiv:1202.5605}.
\bibitem{radius}
H. Dao; R. Takahashi, The radius of a subcategory of modules, {\em Algebra Number Theory} (to appear), \texttt{arXiv:1111.2902}.
\bibitem{sing}
H. Dao; R. Takahashi, Upper bounds for dimensions of singularity categories, Preprint (2012), \texttt{arXiv:1203.1683}.
\bibitem{dim}
H. Dao; R. Takahashi, The dimension of a subcategory of modules, Preprint (2012), \texttt{arXiv:1203.1955}.
\bibitem{FP}
E. M. Friedlander; J. Pevtsova, $\Pi$-supports for modules for finite group schemes, {\em Duke Math. J.} {\bf 139} (2007), no. 2, 317--368.
\bibitem{G}
P. Gabriel, Des cat\'{e}gories ab\'{e}liennes, {\em Bull. Soc. Math. France} {\bf 90} (1962), 323--448.
\bibitem{HS}
M. J. Hopkins; J. H. Smith, Nilpotence and stable homotopy theory, II, {\em Ann. of Math. (2)} {\bf 148} (1998), no. 1, 1--49.
\bibitem{H}
M. Hovey, Classifying subcategories of modules, {\em Trans. Amer. Math. Soc.} {\bf 353} (2001), no. 8, 3181--3191.
\bibitem{KMV}
B. Keller; D. Murfet; M. Van den Bergh, On two examples by Iyama and Yoshino, {\em Compos. Math.} {\bf 147} (2011), no. 2, 591--612.
\bibitem{K}
H. Krause, Thick subcategories of modules over commutative Noetherian rings (with an appendix by Srikanth Iyengar), {\em Math. Ann.} {\bf 340} (2008), no. 4, 733--747.
\bibitem{KK}
H. Krause; D. Kussin, Rouquier's theorem on representation dimension, Trends in representation theory of algebras and related topics, 95--103, Contemp. Math., 406, {\em Amer. Math. Soc., Providence, RI}, 2006.
\bibitem{KS}
H. Krause; G. Stevenson, A note on thick subcategories of stable derived categories, {\em Nagoya Math. J.} (to appear), \texttt{arXiv:1111.2220}.
\bibitem{N0}
A. Neeman, The chromatic tower for $D(R)$, With an appendix by Marcel B\"{o}kstedt, {\em Topology} {\bf 31} (1992), no. 3, 519--532.
\bibitem{N}
A. Neeman, Triangulated categories, Annals of Mathematics Studies, 148, {\em Princeton University Press, Princeton}, NJ, 2001.
\bibitem{Op}
S. Oppermann, Lower bounds for Auslander's representation dimension, {\it Duke Math. J.} {\bf 148} (2009), no. 2, 211--249.
\bibitem{O1}
D. O. Orlov, Triangulated categories of singularities and D-branes in Landau-Ginzburg models, {\em Proc. Steklov Inst. Math.} {\bf 246} (2004), no. 3, 227--248.
\bibitem{O2}
D. O. Orlov, Triangulated categories of singularities, and equivalences between Landau-Ginzburg models, {\em Sb. Math.} {\bf 197} (2006), no. 11-12, 1827--1840.
\bibitem{O3}
D. Orlov, Derived categories of coherent sheaves and triangulated categories of singularities, {\it Algebra, arithmetic, and geometry: in honor of Yu. I. Manin. Vol. II}, 503--531, Progr. Math., 270, {\it Birkh\"{a}user Boston, Inc., Boston, MA}, 2009.
\bibitem{Or}
D. Orlov, Remarks on generators and dimensions of triangulated categories, {\em Moscow Math. J.} {\bf 9} (2009), no. 1, 153--159.
\bibitem{O}
D. Orlov, Formal completions and idempotent completions of triangulated categories of singularities, {\em Adv. Math.} {\bf 226} (2011), no. 1, 206--217.
\bibitem{O5}
D. Orlov, Matrix factorizations for nonaffine LG-models, {\em Math. Ann.} {\bf 353} (2012), no. 1, 95--108.
\bibitem{R2}
R. Rouquier, Representation dimension of exterior algebras, {\em Invent. Math.} {\bf 165} (2006), 357--367.
\bibitem{R}
R. Rouquier, Dimensions of triangulated categories, {\em J. K-Theory} {\bf 1} (2008), 193--256.
\bibitem{S}
H. Schoutens, Projective dimension and the singular locus, {\em Comm. Algebra} {\bf 31} (2003), no. 1, 217--239.
\bibitem{St}
G. Stevenson, Subcategories of singularity categories via tensor actions, {\em Compos. Math.} (to appear), \texttt{arXiv:1105.4698}.
\bibitem{res}
R. Takahashi, Modules in resolving subcategories which are free on the punctured spectrum, {\em Pacific J. Math.} {\bf 241} (2009), no. 2, 347--367.
\bibitem{stcm}
R. Takahashi, Classifying thick subcategories of the stable category of Cohen-Macaulay modules, {\em Adv. Math.} {\bf 225} (2010), no. 4, 2076--2116.
\bibitem{crs}
R. Takahashi, Classifying resolving subcategories over a Cohen-Macaulay local ring, {\em Math. Z.} {\bf 273} (2013), no. 1-2, 569--587.
\bibitem{T}
R. W. Thomason, The classification of triangulated subcategories, {\em Compositio Math.} {\bf 105} (1997), no. 1, 1--27.
\end{thebibliography}
\end{document}